\newtheorem{thm}{Theorem}
\newtheorem{rmk}{Remark}
\newtheorem{prop}{Proposition}
\newtheorem{lem}{Lemma}
\newtheorem{cor}{Corollary}
\newtheorem*{defn}{Definition}
\newcommand\sI{{\mathcal I}}
\newcommand\sO{{\mathcal O}}
\newcommand\bZ{{\mathbb Z}}
\newcommand\Hom{{\rm Hom}}
\newcommand\Ext{{\rm Ext}}
\newcommand\NS{{\rm NS}}
\journal{Journal of Geometry and Physics}
\begin{document}

\begin{frontmatter}

\title{Prioritary omalous bundles on Hirzebruch surfaces}
\author{Marian Aprodu}
\address{University of Bucharest, Faculty of Mathematics and Computer Science, 14 Academiei Str., 010014 Bucharest, Romania}
\address{Simion Stoilow Institute of Mathematics of the Romanian Academy, P.O. Box 1-764, 014700 Bucharest, Romania}
\author{Marius Marchitan}
\address{"\c Stefan cel Mare" University, Str. Universit\u a\c tii 13, 720229 Suceava, Romania}
\address{Integrated Center for Research, Development and Innovation in Advanced Materials, Nanotechnologies, and Distributed Systems for Fabrication and Control (MANSiD), "\c Stefan cel Mare" University, Suceava, Romania}
\address{Simion Stoilow Institute of Mathematics of the Romanian Academy, P.O. Box 1-764, 014700 Bucharest, Romania}
%\dedicacy{To Professor Vasile Br\^\i nz\u anescu on the occasion of his 70th birthday, with gratitude}

%\subjclass[2010]{14J60, 14F05, 46M20, 13DXX.}

\begin{abstract}
An irreducible algebraic stack is called \emph{unirational} if there exists a surjective morphism, representable by algebraic spaces, from a rational variety to an open substack.
We prove unirationality of the stack of prioritary omalous bundles on Hirzebruch surfaces, which implies also the unirationality of the moduli space of omalous $H$-stable bundles for any ample line bundle $H$ on a Hirzebruch surface. To this end, we find an explicit description of the duals of omalous rank-two bundles with a vanishing condition in terms of monads. Since these bundles are prioritary, we conclude that the stack of prioritary omalous bundles on a Hirzebruch surface different from $\mathbb P^1\times \mathbb P^1$ is dominated by an irreducible section of a Segre variety, and this linear section is rational \cite{I}. In the case of the space quadric, the stack has been explicitly described by N. Buchdahl. 
As a main tool we use Buchdahl's Beilinson-type spectral sequence. Monad descriptions of omalous bundles on  hypersurfaces in $\mathbb P^4$, Calabi-Yau complete intersection, blowups of the projective plane and Segre varieties have been recently obtained by A. A. Henni and M. Jardim~\cite{HJ}, and monads on Hizebruch surfaces have been applied in a different context in~\cite{BBR}.
\end{abstract}

\begin{keyword}
Hirzebruch surface, vector bundle, Beilinson spectral sequence, cohomological methods,
omalous bundles, classification.
\end{keyword}

%
%\begin{msc}
%14J60, 14F05, 46M20, 13DXX.
%\end{msc}
%

\end{frontmatter}

\section{Introduction}

An {\em omalous bundle} on a complex projective smooth variety is a vector bundle whose determinant is the anti-canonical bundle and with second Chern class equal to the class of the tangent bundle.
Omalous bundles have been introduced by Ron Donagi and the motivation comes from physics: these conditions on the Chern classes imply the usual Green-Schwartz anomaly cancellation conditions. The omality condition is used in the construction of quantum sheaf cohomology \cite{Gu}. In \cite{HJ} A. A. Henni and M. Jardim found explicit descriptions of (stable) omalous bundles on several types of varieties: hypersurfaces in $\mathbb P^4$, Calabi-Yau complete intersection, blowups of the projective plane and Segre varieties.

{\em Prioritary sheaves} on the projective plane were defined by A. Hirschowitz and Y. Laszlo in \cite{HL}. This notion was extented on birationally ruled surfaces by Ch. Walter in \cite{Wa93}, \cite{Wa95}. In \cite{Wa93} it is proved that if $H$ is a polarisation on a birationally ruled surface with a numerical condition (such polarisations always exist), then any $H$-semistable torsion-free sheaf is prioritary, hence this notion extends $H$-semistability. The advantage in working with prioritary sheaves instead of (semi)stable ones is that their definition is polarisation-free. The most important result on prioritary sheaves is the irreducibility and the smoothness of the stack \cite{HL}, \cite{Wa93}, \cite{Wa95}.

The aim of this paper is to prove unirationality of the stack of prioritary omalous bundles on Hirzebruch surfaces via an explicit monad description of prioritary omalous rank-two vector bundles with a vanishing condition on a Hirzebruch surface. For technical reasons, we work with duals of omalous bundles. This approach does not affect the result, as the dual of a prioritary rank-two bundle is also prioritary. 

We work over the field of complex numbers.
The outline of the paper is the following. In section \ref{sec:prel}, we set the notation and we recall some facts that will be used in the core of the paper. We discuss the numerical invariants associated to rank-two vector bundles on Hirzebruch surfaces and the canonical extensions \cite{BrS1}, \cite{BrS2}, Beilinson spectral sequences on Hirzeburch surfaces \cite{Bu87}, and the general theory of monads \cite{OSS}. In section \ref{sec:prioritary} we describe completely the duals of prioritary omalous bundles in terms of the associated numerical invariants, Proposition \ref{prop: prioritary}. A similar description is valid for bundles with arbitrary Chern classes, Remark \ref{rmk:prioritary}.
In section \ref{sec:substack} we find a necessary and sufficient condition, given by the vanishing of the space of sections of a suitable twist for a dual of an omalous bundle to be given by a specific monad, Theorem \ref{thm: monada}. This vanishing condition is satisfied by all the duals of (semi)stable omalous bundles and moreover  the duals of omalous bundles that satisfy these condition are necessary prioritary, Proposition \ref{prop:ireductibilitate}. Hence the stack of bundles with this condition sits between all the stacks of (semi)stable bundles and the stack of prioritary bundles, and the inclusions are strict, Remark \ref{rmk:inclusions}. The monad description obtained in Theorem \ref{thm: monada}  is used in Section \ref{sec:unirational} to prove that the stack of prioritary omalous bundles is dominated by a rational variety, which is a linear section of a Segre variety, Theorem \ref{thm: unirational}.  Hence the stack of prioritary omalous bundles is an irreducible, smooth, unirational stack of dimension $4$, Theorem \ref{thm: unirational}. %of dimension PENDING.
 Consequently, all the moduli spaces of stable omalous bundles are unirational, Corollary \ref{cor: unirational}.

\medskip

{\em Acknowledgements.} The authors have been partly supported by the CNCS-UEFISCDI grant PN-II-ID-PCE-2012-4-0156. We are grateful to P. Ionescu for indicating us how to verify the rationality of the linear section of the Segre variety in Proposition \ref{prop: Z rational}. We also thank the referee for having made constructive comments that helped improving the presentation.

\section{Preliminaries }
\label{sec:prel} 

In this preliminary section we fix the notation and we recall some definitions and facts that will be used in the main sections \ref{sec:prioritary} and \ref{sec:substack}.

%From the beginning, we fix the notation that we'll use in this note. 

\subsection{Hirzebruch surfaces}
\label{subsec:conv}

%About Hirzebruch surfaces...........

Let $X=\Sigma_e$ be a Hirzebruch surface, $\Sigma_e=\mathbb P(\mathcal O_{\mathbb P^1}\oplus \mathcal O_{\mathbb P^1}(-e))\stackrel{\pi}{\rightarrow}\mathbb{P}^1$ with
$e\ge 0$.
Denote by $C_0=\mathcal{O}_X(1)$ the negative section ($C_0^2=-e$), and by $F$ a fibre of the ruling ($C_0\cdot F=1, F^2=0$).
Recall that $\mathrm{Pic}(X)=\mathbb{Z}\cdot C_0\oplus\mathbb{Z}\cdot F$ and the canonical bundle is $K_X=\mathcal O_X(-2C_0-(e+2)F)$. A line bundle $\mathcal O_X(aC_0+bF)$ has a nonzero global section if and only if $a\ge 0$ and $b\ge 0$. In what concerns the cohomology groups $H^1$ they are given by

\begin{lem}
\label{lem: h1}
Let $X=\Sigma_e$ be a Hirzebruch surface and $a,b\in\mathbb{Z}$. Then
\begin{equation*}
H^1(X,\mathcal{O}_X(aC_0+bF))\cong\left\{
	\begin{array}{rl}
H^0(\mathbb{P}^1, \bigoplus^{-a-1}_{k=1} \mathcal{O}_{\mathbb{P}^1}(ke+b)), & \text{if } a\le -2\\
		0, & \text{if } a = -1\\
H^0(\mathbb{P}^1, \bigoplus^{a}_{k=0} \mathcal{O}_{\mathbb{P}^1}(ke-b-2)), & \text{if } a\ge 0.
	\end{array}\right. 
\end{equation*}
\end{lem}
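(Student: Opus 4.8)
The plan is to compute the cohomology groups $H^1(X, \mathcal{O}_X(aC_0+bF))$ by pushing forward along the ruling $\pi\colon X=\Sigma_e \to \mathbb{P}^1$ and applying the Leray spectral sequence. The key input is the projection formula combined with the standard description of the higher direct images of line bundles on a projectivized bundle. Writing $\mathcal{O}_X(1)=\mathcal{O}_X(C_0)$ for the relative hyperplane bundle of $X=\mathbb{P}(\mathcal{O}_{\mathbb{P}^1}\oplus\mathcal{O}_{\mathbb{P}^1}(-e))$ (with the Grothendieck convention making $C_0$ the negative section), one has $\mathcal{O}_X(aC_0+bF)=\mathcal{O}_X(a)\otimes\pi^*\mathcal{O}_{\mathbb{P}^1}(b)$, so by the projection formula $R^q\pi_*\mathcal{O}_X(aC_0+bF)=\bigl(R^q\pi_*\mathcal{O}_X(a)\bigr)\otimes\mathcal{O}_{\mathbb{P}^1}(b)$.

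\smallskip

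\emph{The three cases.} Since the fibres of $\pi$ are $\mathbb{P}^1$, we have $R^q\pi_*\mathcal{O}_X(a)=0$ for $q\ge 2$, and the Leray spectral sequence for $\pi$ degenerates into the short exact sequence
\begin{equation*}
0\to H^1\bigl(\mathbb{P}^1,\pi_*\mathcal{O}_X(aC_0+bF)\bigr)\to H^1\bigl(X,\mathcal{O}_X(aC_0+bF)\bigr)\to H^0\bigl(\mathbb{P}^1,R^1\pi_*\mathcal{O}_X(aC_0+bF)\bigr)\to 0.
\end{equation*}
Now I would split into the three ranges of $a$. For $a\ge 0$: here $R^1\pi_*\mathcal{O}_X(a)=0$ and $\pi_*\mathcal{O}_X(a)=\mathrm{Sym}^a(\mathcal{O}_{\mathbb{P}^1}\oplus\mathcal{O}_{\mathbb{P}^1}(-e))=\bigoplus_{k=0}^{a}\mathcal{O}_{\mathbb{P}^1}(-ke)$, so $H^1(X,\mathcal{O}_X(aC_0+bF))=H^1(\mathbb{P}^1,\bigoplus_{k=0}^{a}\mathcal{O}_{\mathbb{P}^1}(b-ke))$, and Serre duality on $\mathbb{P}^1$ rewrites this as $H^0(\mathbb{P}^1,\bigoplus_{k=0}^{a}\mathcal{O}_{\mathbb{P}^1}(ke-b-2))$, which is the stated formula. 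For $a\le -2$: now $\pi_*\mathcal{O}_X(a)=0$, while relative Serre duality ($\omega_{X/\mathbb{P}^1}=\mathcal{O}_X(-2)\otimes\pi^*\mathcal{O}_{\mathbb{P}^1}(-e)$) gives $R^1\pi_*\mathcal{O}_X(a)\cong\bigl(\pi_*\mathcal{O}_X(-a-2)\bigr)^{\vee}\otimes\mathcal{O}_{\mathbb{P}^1}(-e)=\bigoplus_{j=0}^{-a-2}\mathcal{O}_{\mathbb{P}^1}(je-e)=\bigoplus_{k=1}^{-a-1}\mathcal{O}_{\mathbb{P}^1}(ke-2e)$; hmm, I would need to track the twist by $\mathcal{O}_{\mathbb{P}^1}(b)$ and re-index carefully — after re-indexing $k\mapsto -a-1-k$ one gets $H^0(\mathbb{P}^1,R^1\pi_*\mathcal{O}_X(aC_0+bF))=H^0(\mathbb{P}^1,\bigoplus_{k=1}^{-a-1}\mathcal{O}_{\mathbb{P}^1}(ke+b))$, matching the claim. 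For $a=-1$: both $\pi_*$ and $R^1\pi_*$ of $\mathcal{O}_X(-1)$ vanish (the line bundle $\mathcal{O}_{\mathbb{P}^1}(-1)$ on each fibre has no cohomology), so $H^1(X,\mathcal{O}_X(-C_0+bF))=0$.

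\smallskip

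\emph{Main obstacle.} The only genuinely delicate point is bookkeeping: getting the direction of the indexing in the direct-sum decomposition and the twists by powers of $e$ exactly right, especially in the case $a\le -2$ where relative Serre duality flips the summand order and introduces the factor $\omega_{X/\mathbb{P}^1}$. A convenient sanity check, which I would carry out, is to verify that the formula gives the correct value of $h^1$ via the Riemann--Roch / Euler characteristic computation $\chi(\mathcal{O}_X(aC_0+bF))=1+\tfrac12(aC_0+bF)\cdot(aC_0+bF-K_X)$ together with the already-recorded vanishing of $H^0$ and (by Serre duality on $X$) of $H^2$ in the relevant ranges; this pins down the numerology unambiguously. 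Apart from this indexing care, the argument is entirely standard.
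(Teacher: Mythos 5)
The paper states Lemma \ref{lem: h1} without proof, as a standard fact about line bundles on Hirzebruch surfaces, so there is no argument of the authors' to compare against; your Leray-plus-projection-formula strategy is the standard way to establish it and is sound in outline, and the cases $a\ge 0$ and $a=-1$ are fine. In the case $a\le -2$, however, your intermediate formula carries a sign error that you flag as ``delicate bookkeeping'' but do not actually resolve: relative duality gives
\[
R^1\pi_*\mathcal O_X(a)\cong\bigl(\pi_*\mathcal O_X(-a-2)\otimes\mathcal O_{\mathbb P^1}(-e)\bigr)^{\vee}
=\bigl(\pi_*\mathcal O_X(-a-2)\bigr)^{\vee}\otimes\mathcal O_{\mathbb P^1}(e)
=\bigoplus_{k=1}^{-a-1}\mathcal O_{\mathbb P^1}(ke),
\]
i.e.\ the twist by $\det(\mathcal O_{\mathbb P^1}\oplus\mathcal O_{\mathbb P^1}(-e))=\mathcal O_{\mathbb P^1}(-e)$ sits \emph{inside} the dual and therefore comes out as $\mathcal O_{\mathbb P^1}(+e)$, not $\mathcal O_{\mathbb P^1}(-e)$. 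Your expression $\bigoplus_{k=1}^{-a-1}\mathcal O_{\mathbb P^1}(ke-2e)$ is off by $\mathcal O_{\mathbb P^1}(2e)$ in every summand, and no re-indexing $k\mapsto -a-1-k$ can repair a uniform shift of the degrees: for $e=1$, $a=-2$, $b=0$ your formula would predict $h^1=0$, whereas Riemann--Roch together with the vanishing of $h^0$ and $h^2$ gives $h^1=2$, which the corrected $H^0(\mathbb P^1,\mathcal O_{\mathbb P^1}(1))$ reproduces. With the sign fixed, twisting by $\mathcal O_{\mathbb P^1}(b)$ yields $\bigoplus_{k=1}^{-a-1}\mathcal O_{\mathbb P^1}(ke+b)$ directly and the lemma follows; the Euler-characteristic cross-check you propose would indeed have caught the slip, so this is a repairable bookkeeping error rather than a conceptual gap, but as written the $a\le-2$ case does not yet compute.
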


\subsection{Rank-two vector bundles on Hirzebruch surfaces as extensions}
\label{subsec:extinderi}

In this section we recall from \cite{BrS1} and \cite{BrS2} the numerical invariants naturally associated to rank-two bundles on Hirzebruch surfaces and the canonical extensions.

Let $V$ be a rank-two vector bundle on a Hirzebruch surface $X=\Sigma_e\stackrel{\pi}{\longrightarrow}\mathbb P^1$ with Chern classes $c_1(V)=\alpha C_0+\beta F$ and $c_2(V)=c_2\in \mathbb Z$. Since the fibres of the ruling are projective lines, we can speak about the generic splitting type of $V$ i.e.:
\[
V|_{F}\cong {\mathcal O}_F(d)\oplus {\mathcal O}_F(\alpha -d)
\]
for a general fibre $F$, where  $2d\ge\alpha$.  The integer $d$ is the first numerical invariant of $V$.

The second numerical invariant $r$ is obtained from a normalisation process:
\[
r=\mathrm{max}\lbrace \ell \in \mathbb Z|\ H^0(X,V(-dC_0-\ell F))\not=0\rbrace .
\]

In this context, we have the following result \cite{BrS1}, \cite{BrS2} (see also \cite{Fr} Chapter~6):

%theorem 1

\begin{thm}
\label{thm:BrS}
Notation as above. There exists  $\zeta$ a zero-dimensional locally complete intersection subscheme of $X$ (or the empty set) of length $\ell(\zeta):=c_2 +\alpha (de-r)-\beta d+2dr-d^2e\geq 0$ such that $V$ is presented as an extension:
\begin{equation}
\label{eqn:canonical extension}
0\rightarrow {\mathcal O}_X(dC_0+rF)\rightarrow V\rightarrow {\mathcal O}_X((\alpha-d)C_0+
(\beta-r)F)\otimes\mathcal  I_\zeta\rightarrow 0.
\end{equation}
\end{thm}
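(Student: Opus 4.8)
The plan is to produce the sub-line-bundle $\mathcal O_X(dC_0+rF)\hookrightarrow V$ directly from the two numerical invariants and then identify the cokernel. First I would use the definition of $r$: by construction $H^0(X,V(-dC_0-rF))\neq 0$, so there is a nonzero section $s\colon \mathcal O_X\to V(-dC_0-rF)$, equivalently a nonzero map $\mathcal O_X(dC_0+rF)\to V$. The key point is to check that this map is a subbundle inclusion away from a zero-dimensional locus, i.e.\ that it does not vanish on any divisor. If it vanished along an effective divisor $D=aC_0+bF$ with $(a,b)\neq(0,0)$, then $s$ would factor through $\mathcal O_X((d+a)C_0+(r+b)F)\to V$; restricting to a general fibre and using the generic splitting type $V|_F\cong\mathcal O_F(d)\oplus\mathcal O_F(\alpha-d)$ with $2d\ge\alpha$ forces $a\le 0$, while effectivity of $D$ on $\Sigma_e$ gives $a\ge0$, hence $a=0$ and $b\ge 1$; but then $H^0(X,V(-dC_0-(r+1)F))\neq 0$, contradicting the maximality of $r$. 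So the vanishing locus of $s$ is a zero-dimensional subscheme $\zeta$ (possibly empty), which is automatically a local complete intersection since it is the degeneracy locus of a section of a rank-two bundle on a smooth surface (Koszul), and we get the exact sequence \eqref{eqn:canonical extension} with quotient $\mathcal O_X((\alpha-d)C_0+(\beta-r)F)\otimes\mathcal I_\zeta$ after matching determinants: $\det V=c_1(V)=\alpha C_0+\beta F$ forces the quotient line bundle to be $\mathcal O_X((\alpha-d)C_0+(\beta-r)F)$.

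The remaining computation is the length of $\zeta$. From the exact sequence, $c_2(V)=\big(dC_0+rF\big)\cdot\big((\alpha-d)C_0+(\beta-r)F\big)+\ell(\zeta)$ by the Whitney formula applied to a twist of an ideal sheaf sequence (the $\ell(\zeta)$ term is the contribution of $c_2(\mathcal I_\zeta)$). Expanding the intersection product with $C_0^2=-e$, $C_0\cdot F=1$, $F^2=0$ gives
\[
(dC_0+rF)\cdot((\alpha-d)C_0+(\beta-r)F)=-d(\alpha-d)e+d(\beta-r)+r(\alpha-d),
\]
so $\ell(\zeta)=c_2-\big(-d(\alpha-d)e+d(\beta-r)+r(\alpha-d)\big)=c_2+\alpha(de-r)-\beta d+2dr-d^2e$, matching the stated formula; nonnegativity of $\ell(\zeta)$ is then automatic since it is the length of an honest subscheme (or $0$ if $\zeta=\varnothing$).

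The main obstacle is the subbundle argument in the first paragraph: one must argue carefully that the chosen section of $V(-dC_0-rF)$ has zero-dimensional vanishing locus, ruling out a fixed divisorial component. The two ingredients that make this work are (i) the control on the ``$C_0$-degree'' coming from the generic splitting type together with $2d\ge\alpha$, which handles a possible $C_0$-component, and (ii) the maximality built into the definition of $r$, which handles a possible $F$-component. Everything else — the lci property of $\zeta$, the determinant bookkeeping that pins down the quotient, and the Chern-class computation giving $\ell(\zeta)$ — is routine. Since this is a known result from \cite{BrS1}, \cite{BrS2} (and \cite{Fr}, Chapter~6), I would state it and refer to those sources for the full details, reproducing only the normalisation argument if it is needed later.
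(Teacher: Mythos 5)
Your proposal is correct: the normalisation argument (effectivity forces $a\ge 0$, the generic splitting type with $2d\ge\alpha$ forces $a\le 0$, and maximality of $r$ rules out a residual fibre component), the Koszul identification of the quotient, and the Chern-class computation of $\ell(\zeta)$ all check out. The paper itself gives no proof of this statement --- it is quoted verbatim from \cite{BrS1}, \cite{BrS2} and \cite{Fr}, Chapter~6 --- and your sketch is exactly the standard argument found in those references, so there is nothing to compare beyond noting that you have correctly reconstructed it.
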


The extension (\ref{eqn:canonical extension}) is called the {\em canonical extension} of $V$. This extension and the invariants $d$ and $r$ are very useful in a number of situations. For example, in \cite{ABCRAS2} a numerical stability criterion involving these invariants has been proved. The existence of vector bundles with given numerical invariants has been settled in \cite{ABNMJ}.

We will apply the canonical extension to prove a numerical criterion for bundles with certain Chern classes to be prioritary (this notion is recalled below) in section~\ref{sec:prioritary}.

\subsection{Prioritary sheaves}\label{subsec:prio}

The notion of prioritary sheaf was first introduced by A. Hirschowitz and Y. Laszlo in \cite{HL}, 
in order to include the class of stable bundles in a larger class. It consists of 
coherent sheaves $\mathcal E$ on $\mathbb{P}^2$, without torsion, which satisfy the property 
$\Ext^2 (\mathcal E,\mathcal E(-1))=0$, called {\em prioritary sheaves}, and  A. Hirschowitz and Y. Laszlo  proved that the latter form an irreducible space. Following the same path, Ch. Walter extended in \cite{Wa93}, \cite{Wa95} the notion of prioritary sheaf on birationally ruled surfaces. He considered 
$\pi:S\to C$ a birationally ruled surface and called prioritary a coherent sheaf $\mathcal E$ on $S$ 
which is torsion-free and $\Ext^2 (\mathcal E,\mathcal E(-f_p))=0$ for all $p\in C$, where $f_p=\pi^{-1}(p)$. 
He showed that, for given $r\ge 2, c_1\in \NS (S)$ and $c_2\in\bZ$, the stack 
${\mathrm{Prior}}_S(r,c_1,c_2)$ of prioritary torsion-free sheaves on $S$ of rank $r$ and Chern classes 
$c_1$ and $c_2$ is smooth and irreducible of dimension $-\chi(\mathcal E,\mathcal E)$, see \cite{Wa93} Proposition 2, \cite{Wa95} (1.0.1) and the proof of Proposition 3.1. Moreover, for any polarisation $H$ on $S$ with $H\cdot(K_S+f_p)<0$ for $p\in C$ any $H$-semistable sheaf is prioritary, \cite{Wa93}.

Note that the dual of a rank-two prioritary bundle $V$ on a birationally ruled surface remains prioritary. Indeed, we apply the formula $V^*=V\otimes \mathrm{det}(V)$ in the definition.

In the sequel, we will be concerned with prioritary rank-two bundles $V$ on a Hirzeburch surface $X$. In this case, the definition reduces to the condition 
\begin{equation}
\label{eqn: prioritary}
\Ext^2(V,V(-F))\cong H^2(X,V^*\otimes V(-F))=0
\end{equation}
where $F$ is the class of the fibre of the ruling. In section \ref{sec:prioritary} we will see that prioritary bundles with Chern classes $c_1=K_X$ and $c_2=4$ admit a precise numerical characterisation.

\subsection{Beilison spectral sequences}\label{subsec:Beilinson}

Beilinson spectral sequences have been defined first on a projective space by A. Beilinson with the aim of describing its derived category. Later on, similar constructions have been made on other classes of varieties (Grassmannians, hyperquadrics, Hirzebruch surfaces, scrolls etc). We recall here very briefly the case of a Hirzeburch surface
 $X=\Sigma_e$. If $\Delta\subset X\times X$ denotes the diagonal, N. Buchdahl \cite{Bu87} observed that $\Delta$ can be described scheme-theoretically as the zero-locus of a global section in a rank-two vector bundle over $X\times X$. This phenomenon produces Beilinson type spectral sequences on $X$.
Specifically, if $V$ is a vector bundle of arbitrary rank on $X$, there is a spectral sequence abutting to $V$ (see \cite{Bu87}):
\begin{equation}
E^{p,q}_1\Rightarrow
\left\{
\begin{array}{ll}
{V} & \mbox{if } p+q=0\\
0        & \mbox{otherwise.}
\end{array}
\right.
\label{spectral}
\end{equation}

Moreover, $E^{p,q}_1=0$ if $p\not\in\{-2,-1,0\}$ or if $q\not\in\{0,1,2\}$ and the remaining terms of the spectral sequence are described as follows, \cite{Bu87}:
\begin{equation}
\label{eqn: E_0}
E^{0,\,q}_1\cong H^q(X,V)\otimes\mathcal{O}_X,
\end{equation}
\begin{equation}
\label{eqn: E_2}
E^{-2,\,q}_1\cong H^q(X,V(-C_0-F))\otimes
\mathcal{O}_X(-C_0-(e+1)F)),
\end{equation}
and $E_1^{-1,q}$ can be computed from an exact sequence 
\begin{equation}
\label{eqn: E_1a}
H^q(X,V(-F))\otimes\mathcal{O}_X(-F)
\rightarrow E_1^{-1,q} \rightarrow
H^q(X,V(-C_0))\otimes\mathcal{O}_X(-C_0-eF).
\end{equation}

Using the Beilinson spectral sequence, we see that a vector bundle on $X$ is  determined by the cohomology of suitable twists and some vector bundle morphisms.

\subsection{Monads}\label{subsec:monad}

\begin{defn}
\label{defn:monads}
Let $X$ be a smooth projective variety. A {\em monad} on $X$ is a complex of vector bundles
\[
0\rightarrow A\stackrel{a}{\rightarrow}B\stackrel{b}{\rightarrow}C\rightarrow 0
\]
with $a$ injective and $b$ surjective. The cohomology at the middle of this complex is called the {\em cohomology of the monad}.
\end{defn}

In \cite{OSS}, it was noted that the cohomology of a monad is a vector bundle with precisely determined Chern classes. In the study of monads, a fundamental tool is represented by the following notion \cite{OSS}.

\begin{defn}
\label{defn:display} 
The {\em display} of a monad
\[
0\rightarrow A\stackrel{a}{\rightarrow}B\stackrel{b}{\rightarrow}C\rightarrow 0
\]
is the following commutative exact diagram:
\[
\xymatrix{
&&0\ar[d] & 0\ar[d] &\\
0\ar[r] & A\ar[r] \ar@{=}[d]&K\ar[r]\ar[d]&V\ar[r]\ar[d] &0\\
0\ar[r] & A\ar[r]^a &B\ar[r]\ar[d]^b&Q\ar[r]\ar[d] &0\\
&&C\ar[d] \ar@{=}[r] & C\ar[d] &\\
&&0&0&
}
\]
where $K=\mathrm{ker}(b)$ and $Q=\mathrm{coker}(a)$.
\end{defn}

Monads have been proved useful in describing various moduli spaces, see for example \cite{BH78}. A key fact is the following relation between morphisms of monads and vector bundle morphisms \cite{OSS}.

\begin{lem}
\label{lem:monads}
Let $E=H(M), E'=H(M')$ two vector bundles that represent the cohomology of two monads
\begin{equation*}
\begin{array}{lrl}
(M)&:\quad &0\rightarrow A\stackrel{a}{\rightarrow}B
    \stackrel{b}{\rightarrow}C\rightarrow 0\\
(M')&:\quad &0\rightarrow A'\stackrel{a'}{\rightarrow}B'
    \stackrel{b'}{\rightarrow}C'\rightarrow 0
\end{array}
\end{equation*}
over a smooth projective variety $X$. The map
$$
h:\Hom(M,M')\rightarrow\Hom(E,E')
$$
that sends every monad morphism to the corresponding bundle morphism is bijective if the following conditions are verified:
\begin{equation*}
\begin{array}{l}
  \Hom(B,A')=\Hom(C,B')=0,\\
  H^1(X,B^*\otimes A')=H^1(X,C^*\otimes B')=0,\\
  H^1(X,C^*\otimes A')=H^2(X,C^*\otimes A')=0.
\end{array}
\end{equation*}
\end{lem}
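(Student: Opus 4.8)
The plan is to follow the classical argument from \cite{OSS} adapted to an arbitrary smooth projective variety $X$, exploiting the display of each monad. First I would recall that a monad morphism $M\to M'$ is a triple of bundle maps $(\phi_A:A\to A',\ \phi_B:B\to B',\ \phi_C:C\to C')$ making the two squares commute, and that such a triple induces a map $\phi_B|_K:K\to K'$ on kernels and hence a map $E=K/A\to K'/A'=E'$ on the cohomologies; this is the map $h$. To prove $h$ is bijective, the strategy is to pass through the two short exact sequences of the display, $0\to A\to K\to E\to 0$ and $0\to K\to B\to C\to 0$, apply $\Hom(-,-)$ and $\Hom(-,-')$ with the appropriate primed objects, and chase the resulting long exact sequences. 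The vanishing hypotheses are exactly what is needed to make the relevant connecting maps and flanking terms disappear.

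The key steps, in order, are as follows. (1) \emph{Surjectivity onto maps factoring through $K'$.} Applying $\Hom(E,-)$ to $0\to A'\to K'\to E'\to 0$ and $\Hom(-,K')$ to $0\to A\to K\to E\to 0$ is not quite the cleanest route; instead I would first show $\Hom(K,A')=0$ and $\Hom(E,A')=0$ using $\Hom(B,A')=0$ together with the sequence $0\to K\to B\to C\to 0$ (which gives $\Hom(K,A')\hookrightarrow$ something controlled by $\Hom(B,A')$ after noting $\Hom(C,A')$ injects into $\Hom(B,A')$), and similarly handle $\Hom(C,B')=0$ to get $\Hom(C,K')=0$ via $0\to K'\to B'\to C'\to 0$. (2) \emph{Lifting a bundle map $E\to E'$ to $K\to K'$.} Given $\psi:E\to E'$, pull back the extension $0\to A'\to K'\to E'\to 0$ along $\psi$; the obstruction to splitting the pulled-back-and-pushed-out comparison lives in $\Ext^1(E,A')$, and I would show this group vanishes by combining $H^1(X,B^*\otimes A')=0$, $H^1(X,C^*\otimes A')=0$ and the display sequences (so that $\Ext^1(E,A')$ is squeezed between terms built from $\Ext^\bullet(C,A')$ and $\Ext^\bullet(B,A')$, all of which vanish by hypothesis — here $H^2(X,C^*\otimes A')=0$ enters). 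This yields $\widetilde\psi:K\to K'$ over $\psi$. (3) \emph{Extending $K\to K'$ to $B\to B'$.} Using $0\to K\to B\to C\to 0$ and $0\to K'\to B'\to C'\to 0$, the obstruction to extending $\widetilde\psi$ to $\phi_B:B\to B'$ lies in $\Ext^1(C,K')$; I would show this vanishes from $\Hom(C,B')=0$, $H^1(X,C^*\otimes B')=0$ and $H^1(X,C^*\otimes A')=0$ via the sequence $0\to A'\to K'\to E'\to 0$ tensored appropriately — again the flanking $\Ext$ groups are controlled by the hypotheses. Then $\phi_B$ restricted to $K$ recovers $\widetilde\psi$ up to a map $K\to K'$ that dies, and $\phi_B$ induces $\phi_C:C\to C'$ on cokernels and $\phi_A:A\to A'$ is $\widetilde\psi|_A$, giving a genuine monad morphism with $h(\phi)=\psi$: surjectivity. (4) \emph{Injectivity.} If $h(\phi)=0$ then $\phi_B$ maps $K$ into $A'$, but $\Hom(K,A')=0$ forces $\phi_B|_K=0$; hence $\phi_B$ factors through $C$, i.e. comes from a map $C\to B'$, but $\Hom(C,B')=0$ gives $\phi_B=0$, and then $\phi_A=0$, $\phi_C=0$ as well.

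The main obstacle I expect is step (2)–(3): correctly identifying which $\Ext^1$ (equivalently $H^1$) group is the obstruction at each stage and checking that the given list of six vanishing conditions is \emph{exactly} sufficient — in particular that $H^2(X,C^*\otimes A')=0$ is genuinely needed (it controls the $\Ext^2(C,A')$ term that appears when one breaks $\Ext^1(E,A')$ using $0\to A\to K\to E\to 0$ and then $0\to K\to B\to C\to 0$). The bookkeeping of two nested short exact sequences on the source and two on the target, and the naturality required so that the lifts on kernels, total spaces and cokernels are mutually compatible, is the delicate part; everything else is a formal diagram chase. Uniqueness of the intermediate lifts is \emph{not} claimed, only existence, so I would be careful to phrase step (2) and step (3) as ``there exists a lift'' and then reconcile choices at the end using the injectivity-type vanishings already established in step (1).
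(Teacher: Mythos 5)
First, a point of comparison: the paper gives no proof of this lemma at all --- it is quoted from \cite{OSS} (Lemma II.4.1.3 there), so your attempt can only be measured against the standard argument. Your overall architecture is exactly that classical one: chase the long exact sequences coming from the two short exact sequences of each display, prove injectivity of $h$ from the vanishing of the relevant $\Hom$ groups, and prove surjectivity by killing obstruction classes to successive lifts. Your steps (1) and (4) are correct, with one small imprecision: $\Hom(K,A')=0$ does not come from an injection into something controlled by $\Hom(B,A')$ alone, but from the exact sequence $\Hom(B,A')\to\Hom(K,A')\to\Ext^1(C,A')$, so you need \emph{both} $\Hom(B,A')=0$ and $H^1(X,C^*\otimes A')=0$ there.

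The genuine problem is that in steps (2) and (3) you name obstruction groups that do \emph{not} vanish under the hypotheses. To lift $\psi:E\to E'$ you only need to lift the composite $K\twoheadrightarrow E\to E'$ through the surjection $K'\twoheadrightarrow E'$ with kernel $A'$; the obstruction lies in $\Ext^1(K,A')$, which vanishes because it sits between $\Ext^1(B,A')=H^1(X,B^*\otimes A')=0$ and $\Ext^2(C,A')=H^2(X,C^*\otimes A')=0$. The group $\Ext^1(E,A')$ that you propose to kill instead satisfies $\Ext^1(E,A')\cong\Hom(A,A')$ once $\Hom(K,A')=\Ext^1(K,A')=0$, and $\Hom(A,A')\neq 0$ in the paper's application (there $A=A'=\mathcal O_X(-C_0-(e+1)F)^{\oplus e}$); so that vanishing is simply false. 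Likewise, extending $K\to K'\hookrightarrow B'$ to $\phi_B:B\to B'$ is obstructed in $\Ext^1(C,B')=H^1(X,C^*\otimes B')=0$, not in $\Ext^1(C,K')$, which under the hypotheses is isomorphic to $\Hom(C,C')\neq 0$ (here $C=C'=\mathcal O_X^{\oplus 2}$). As written, then, two of your steps assert vanishings that fail; the repair is immediate and stays entirely within your framework --- replace $E$ by $K$ in the first lifting problem and $K'$ by $B'$ in the second --- after which $\phi_A=\widetilde\psi|_A$ (it lands in $A'=\ker(K'\to E')$) and $\phi_C$ on cokernels come for free, and your injectivity argument reconciles the choices.
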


In section \ref{sec:substack} we obtain a precise characterisation using monads for a class of rank-two vector bundles on a Hirzebruch surface.

\section{A numerical criterion for prioritary bundles}
\label{sec:prioritary}

Using the canonical extensions (\ref{eqn:canonical extension}) from Theorem \ref{thm:BrS}, prioritary omalous bundles can be characterised in terms of invariants $d$ and $r$. For technical reasons, we shall study here, and in the next section, the {\em duals} of rank-two omalous bundles rather than these bundles themselves. The dual bundles have $c_1=K_X$ and $c_2=4$. From the point of view of prioritary bundles, it does not make any difference, as we have already noted that the dual of a prioritary bundle remains prioritary.

The precise description is contained in the following result:

\begin{prop}
\label{prop: prioritary}
Let $V$ be a rank-two vector bundle on a Hirzebruch surface $X$ with $c_1(V)=K_X$ and $c_2(V)=4$. Consider the canonical extension
\begin{equation}
\label{prio: 1}
0\to L_1\to V\to L_2\otimes \sI_\zeta\to 0,
\end{equation}
with $L_1=\sO_X(dC_0+rF)$, $L_2=\sO_X(-(d+2)C_0-(e+2+r)F)$ 
and $d\geq-1$. Then
\[
H^2(X,V^*\otimes V(-F))\cong H^0(X,\sO_X(2dC_0+(2r+1)F)).
\]
In particular, $V$ is prioritary if and only if $d=-1$ or $r\le -1$.
\end{prop}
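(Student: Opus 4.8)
The strategy is to compute the cohomology group $H^2(X,V^*\otimes V(-F))$ directly from the canonical extension (\ref{prio: 1}), exploiting the fact that for a rank-two bundle $V^*\cong V\otimes\dt(V)^{-1}$, so $V^*\otimes V\cong \cEnd(V)\oplus\text{(trace part)}$... but more efficiently, since $V$ has rank two, $V^*\otimes V(-F)\cong V\otimes V^*(-F)$ and one can instead work with $V^*\otimes V(-F)\cong \cHom(V,V(-F))$. The cleanest route is to tensor the canonical extension (\ref{prio: 1}) by $V^*(-F)$ and by the dual sequence (twisted appropriately), producing a two-step filtration whose graded pieces are of the form $L_i^{-1}\otimes L_j(-F)$ or $L_i^{-1}\otimes L_j\otimes\sI_\zeta(-F)$. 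Concretely, apply $\cHom(-,V(-F))$ to (\ref{prio: 1}) to get a long exact sequence relating $H^2(X,V^*\otimes V(-F))$ to $H^2(X,L_1^{-1}\otimes V(-F))$ and $H^2(X,L_2^{-1}\otimes V(-F)\otimes\sO_X(?))$; then resolve each of these by tensoring (\ref{prio: 1}) itself with the appropriate line bundle. Since we are computing an $H^2$, only a few terms survive: serre duality converts each $H^2(X,\sO_X(aC_0+bF))$ into $H^0$ of the canonically-twisted dual, and the ideal-sheaf factors $\sI_\zeta$ only improve vanishing for $H^2$ (as $H^2(X,\sF)\twoheadrightarrow H^2(X,\sF\otimes\sO_\zeta)=0$ implies $H^2(X,\sF\otimes\sI_\zeta)\cong H^2(X,\sF)$).

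Carrying this out, the surviving contribution should come from a single graded piece, namely $L_1^{-1}\otimes L_1(-F)\cong\sO_X(-F)$ (trivial $H^2$) versus the cross term $L_1^{-1}\otimes L_2(-F)$ and its transpose $L_2^{-1}\otimes L_1(-F)$. With $L_1=\sO_X(dC_0+rF)$ and $L_2=\sO_X(-(d+2)C_0-(e+2+r)F)$, one computes $L_2^{-1}\otimes L_1=\sO_X((2d+2)C_0+(2r+e+2)F)$, and after Serre duality on the relevant $H^2$ (recall $K_X=\sO_X(-2C_0-(e+2)F)$), the twist $-F$ and the canonical twist combine to yield exactly $H^0(X,\sO_X(2dC_0+(2r+1)F))$. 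One must check that all other graded pieces contribute zero to $H^2$: this uses Lemma \ref{lem: h1} together with the numerical constraint $d\ge -1$, and the fact that $H^2(X,\sO_X(aC_0+bF))\cong H^0(X,\sO_X((-a-2)C_0+(-b-e-2)F))^\vee$ vanishes unless $-a-2\ge 0$, i.e. $a\le -2$. Since the entries produced by $L_1^{-1}\otimes L_1(-F)$ and the $\sI_\zeta$-decorated pieces all have $C_0$-degree $\ge -1$ (using $d\ge -1$), their $H^2$ vanishes, isolating the single nonzero term.

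For the final "in particular" clause: having established $H^2(X,V^*\otimes V(-F))\cong H^0(X,\sO_X(2dC_0+(2r+1)F))$, I invoke the stated fact that $\sO_X(aC_0+bF)$ has a nonzero global section if and only if $a\ge 0$ and $b\ge 0$. Thus the group vanishes — equivalently $V$ is prioritary by the criterion (\ref{eqn: prioritary}) — precisely when $2d<0$ or $2r+1<0$, i.e. $d\le -1$ or $r\le -1$. Combined with the standing hypothesis $d\ge -1$, the condition $2d<0$ becomes $d=-1$, giving "$d=-1$ or $r\le -1$" as claimed.

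The main obstacle I anticipate is bookkeeping in the filtration argument: one must be careful that tensoring the canonical extension by a line bundle and then by $\sI_\zeta$ does not introduce spurious $H^2$ contributions, and that the connecting maps in the two long exact sequences do not cause cancellation or survival of unexpected terms. In particular, verifying that the piece $\Ext^2(L_2\otimes\sI_\zeta, L_2\otimes\sI_\zeta(-F))$ vanishes requires a local-to-global spectral sequence or a direct argument using that $\zeta$ is a local complete intersection of dimension zero, so $\cExt^{>0}$ is supported on $\zeta$ and dies in $H^2$ of a surface for degree reasons — this is the one step where the "locally complete intersection" hypothesis from Theorem \ref{thm:BrS} is genuinely used.
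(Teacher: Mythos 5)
Your plan is correct and lands on the right answer, but it runs the computation in the opposite order from the paper, and the paper's order is noticeably cleaner. You filter $V^*\otimes V(-F)$ by applying $\cHom(-,V(-F))$ to the canonical extension and Serre-dualize each graded piece; the surviving piece is indeed $\Ext^2(L_1,L_2\otimes\sI_\zeta(-F))\cong H^2(X,L_1^{-1}\otimes L_2(-F))\cong H^0(X,L_1^{\otimes 2}(F))^\vee=H^0(X,\sO_X(2dC_0+(2r+1)F))^\vee$, while the transposed cross term $\Ext^2(L_2\otimes\sI_\zeta,L_1(-F))$ dies because $L_2^{-1}\otimes L_1(-F)$ has $C_0$-degree $2d+2\ge 0>-2$ (your wording leaves both cross terms on the table, so make this asymmetry explicit). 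The price of your route is exactly the point you flag at the end: with $\sI_\zeta$ in the contravariant slot you must control $\cExt^{\ge 1}(\sI_\zeta,-)$ through the local-to-global spectral sequence; these sheaves are supported on the zero-dimensional $\zeta$, so in total degree $2$ only a quotient of $H^2(X,\cHom)$ survives, which then vanishes --- workable, but it needs to be written out. The paper sidesteps all of this by first using $V\cong V^*\otimes K_X$ (valid since $V$ has rank two and determinant $K_X$) together with Serre duality applied globally, reducing the whole statement to $H^0(X,V\otimes V(F))\cong H^0(X,\sO_X(2dC_0+(2r+1)F))$; from there one only tensors the extension by line bundles and takes $H^0$, where left-exactness and the inclusion $\sI_\zeta\subset\sO_X$ render the ideal sheaf harmless. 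Both arguments rest on the same inputs (the canonical extension, Serre duality, the section criterion for $\sO_X(aC_0+bF)$, and $d\ge -1$), and your derivation of the final equivalence from the vanishing criterion is identical to the paper's.
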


\proof
From the isomorphism $V\cong V^*\otimes K_X$, and from Serre duality, we have to prove
\begin{equation}
\label{prio: 2}
H^0(X,V\otimes V(F))\cong H^0(X,\sO_X(2dC_0+(2r+1)F)).
\end{equation}

Twisting the sequence (\ref{prio: 1}) with $V(F)$ we obtain a sequence
\begin{equation}
\label{prio: 3}
0\to H^0(X,L_1\otimes V(F))\to H^0(X,V\otimes V(F))\to H^0(X,L_2\otimes \mathcal I_\zeta\otimes V(F))
\end{equation}

We claim that $H^0(X,L_2\otimes V(F))=0$. Indeed, twist the equence (\ref{prio: 1}) with $L_2(F)$ and obtain
\begin{equation}
\label{prio: 4}
0\to H^0(X,L_1\otimes L_2(F))\to H^0(X,L_2\otimes V(F))\to H^0(X,L_2^{\otimes 2}(F)\otimes \mathcal I_\zeta)
\end{equation}
Since $L_1\otimes L_2\cong K_X$ it follows that $H^0(X,L_1\otimes L_2(F))$ and, since $d\ge -1$, we have $H^0(X,L_2^{\otimes 2}(F))=0$, and the sequence (\ref{prio: 4}) implies the vanishing of $H^0(X,L_2\otimes V(F))=0$.
In particular, from (\ref{prio: 3}) we obtain an isomorphism $H^0(X,L_1\otimes V(F))\cong H^0(X,V\otimes V(F))$.

We compute $H^0(X,L_1\otimes V(F))$. From (\ref{prio: 1}) twisted by $L_1(F)$ we obtain the sequence
\[
0\to H^0(X,L_1^{\otimes 2}(F))\to H^0(X,L_1\otimes V(F))\to H^0(L_1\otimes L_2(F)\otimes \mathcal I_\zeta)
\]
Since $ H^0(L_1\otimes L_2(F))=0$, we obtain the isomorphism predicted in (\ref{prio: 2}).
\endproof

\begin{rmk}
\label{rmk:prioritary}
An identical proof works for vector bundles with arbitrary Chern classes. The precise statement that can be proved is that a rank-two vector bundle $V$ on $X$ with $c_1(V)=\alpha C_0+\beta F$  and associated numerical invariants $d$ and $r$ is prioritary if and only if 
\[
d=\left[\frac{\alpha +1}{2}\right]\mbox{ or }r<\frac{\beta+e+1}{2}.
\]
These conditions follow from the isomorphism 
\[
H^2(X,V^*\otimes V(-F))\cong H^0(X,\mathcal O_X((2d-\alpha)C_0+(2r-\beta-e-1)F)).
\]
\end{rmk}

%\begin{cor}
%\label{cor: h2=0}
%Let $V$ be a prioritary vector bundle on a Hirzebruch surface $X$ with $c_1(V)=K_X$ and $c_2(V)=4$.  Then $H^2(X,V^*\otimes V)=0$.
%\end{cor}
%
%
%
%\proof
%Note that $H^2(X,V^*\otimes V)\cong H^0(X,V\otimes V)$, as $V^*\cong V\otimes K_X^{-1}$. Twisting the canonical extension (\ref{prio: 1}) by $V$, we are reduced to proving that $H^0(X,V\otimes L_1)=H^0(X,V\otimes L_2)=0$. These vanishing conditions follow again from (\ref{prio: 1}) twisted by $L_1$, and $L_2$ respectively, and using $L_1\otimes L_2=K_X$ and the conditions $d=-1$ or $r\leq -1$ from Theorem \ref{thm:prioritary} which imply $H^0(X,L_1^{\otimes 2})=H^0(X,L_2^{\otimes 2})=0$.
%\endproof
%

\section{Monads for prioritary omalous bundles}
\label{sec:substack}

%\marginpar{Introducere despre acest paragraf.....}

In this section, we will analyse rank-two vector bundles $V$ on $X$
with $c_1(V)=K_X$ and $c_2(V)=4$ which satisfy the extra-condition
\begin{equation}
\label{eqn:vanishing}
H^0(X,V(C_0+F))=0.
\end{equation}

Note that the cotangent bundle $\Omega^1_X$ satisfies the hypotheses. Indeed, from the natural extension
\begin{equation}
%\label{ext:omega}
0\to\mathcal{O}_X(-2F)\to \Omega^1_X\to\mathcal{O}_X(-2C_0-eF)\to 0.
%\tag{1}
\end{equation}
we infer that $H^0(X,\Omega^1_X(C_0+F))=0$. In the same time, the sequence above destabilizes $\Omega^1_X$ with respect to any polarisation, hence $\Omega^1_X$ is never stable.

However, we can prove the following

\begin{prop}
Let $H$ be an ample line bundle on $X$ and $V$ be an $H$-stable rank-two vector bundle on $X$ with  $c_1(V)=K_X$ and $c_2(V)=4$. Then $V$ satisfies the condition (\ref{eqn:vanishing}).
\end{prop}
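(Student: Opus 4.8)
The plan is to argue by contradiction: suppose $V$ is $H$-stable with $c_1(V)=K_X$ and $c_2(V)=4$, but $H^0(X,V(C_0+F))\neq 0$. A nonzero section $s\in H^0(X,V(C_0+F))$ gives an inclusion $\mathcal O_X(-C_0-F)\hookrightarrow V$, and hence a subsheaf $L\hookrightarrow V$ where $L=\mathcal O_X(-C_0-F)\otimes\mathcal O_X(Z)$ with $Z$ the (possibly empty) effective divisorial part of the zero locus of $s$; in particular the saturation $L$ of the image is a line bundle of the form $\mathcal O_X(aC_0+bF)$ with $a\ge -1$, $b\ge -1$, and $V/L$ is torsion-free. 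The first step is to write down exactly what $H$-stability forces on such a saturated sub-line-bundle: $H\cdot c_1(L) < \tfrac12 H\cdot c_1(V) = \tfrac12 H\cdot K_X$.

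The second step is to extract a contradiction from the numerics. From the inclusion $\mathcal O_X(-C_0-F)\hookrightarrow L$ we get $a\ge -1$ and $b\ge -1$, so $c_1(L)=aC_0+bF$ with $a+1\ge 0$ and $b+1\ge 0$; combined with the stability inequality $2\,H\cdot c_1(L) < H\cdot K_X = H\cdot(-2C_0-(e+2)F)$, i.e. $H\cdot\bigl((2a+2)C_0+(2b+e+2)F\bigr) < 0$. Writing $H=pC_0+qF$ ample (so $p>0$ and $q>pe$, equivalently $H\cdot C_0=q-pe>0$ and $H\cdot F=p>0$), and using $2a+2\ge 0$, $2b+e+2\ge e\ge 0$, both terms $H\cdot(2a+2)C_0$ and $H\cdot(2b+e+2)F$ are $\ge 0$, so their sum is $\ge 0$, contradicting strict negativity. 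Hence no such section exists. I should double-check the boundary case where $Z$ is large enough to push $a$ or $b$ up — but that only makes $c_1(L)$ bigger, strengthening the contradiction — and the case $a=-1$ or $b=-1$, where the corresponding term is $0$, still giving a nonnegative sum, so the strict inequality still fails.

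The one genuine subtlety — the step I expect to be the main obstacle — is justifying that the section $s$ really produces a \emph{saturated} sub-line-bundle $L$ with $c_1(L)$ of the stated form and with $a\ge -1$, $b\ge -1$, rather than just a subsheaf; one must pass to the saturation of $\mathcal O_X(-C_0-F)\cdot s$ inside $V$, note that on a smooth surface the saturation of a rank-one subsheaf of a locally free sheaf is a line bundle, and that it contains $\mathcal O_X(-C_0-F)$, hence its $c_1$ is $\mathcal O_X(-C_0-F)$ twisted by an effective divisor, giving the sign constraints $a\ge -1,\ b\ge -1$ via the fact that an effective divisor on $\Sigma_e$ has class $mC_0+nF$ with $m\ge 0$ and $n\ge 0$ (which in turn uses $C_0^2=-e$: effectivity of $aC_0+bF$ is exactly $a\ge 0,\ b\ge 0$, as recalled in Subsection~\ref{subsec:conv}). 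Once this is in place the rest is the short ampleness computation above.
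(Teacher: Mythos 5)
Your proof is correct and follows essentially the same route as the paper: a nonzero section of $V(C_0+F)$ yields a sub-line-bundle of $V$ whose $H$-slope is at least $\mu_H(V)$, contradicting stability. The only difference is that the paper skips the saturation step entirely (slope stability is tested against \emph{all} rank-one subsheaves, so $\mathcal O_X(-C_0-F)\hookrightarrow V$ itself suffices, with $\mu_H(\mathcal O_X(-C_0-F))-\mu_H(V)=\tfrac{e}{2}\,H\cdot F\ge 0$); your saturation argument is harmless and, as you note, only strengthens the inequality.
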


\proof
If $H^0(X,V(C_0+F))\ne 0$ then there is an injective map $\mathcal O_X(-C_0-F)\to V$. Since $H\cdot F>0$ and $c_1(V)=\mathcal O_X(-2C_0-(e+2)F)$ it follows that $\mu_H(\mathcal O_X(-C_0-F))\ge\mu_H(V)$ i.e. $\mathcal O_X(-C_0-F)$ is destabilising.
\endproof

\begin{rmk}
If $e>0$ then $H$-stability can be replaced by $H$-semistability in the statement above.
\end{rmk}

On the other hand, we have

\begin{prop}
\label{prop:ireductibilitate}
Any rank-two vector bundle $V$ on $X$ with $c_1(V)=K_X$, $c_2(V)=4$ and $H^0(X,V(C_0+F))=0$ is prioritary. In particular, these bundles form an irreducible stack $\mathcal N_X$, which is an open substack of the stack ${\mathrm{Prior}}_X(2,K_X,4)$ of prioritary bundles.
\end{prop}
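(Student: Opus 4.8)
The strategy is to show that the vanishing condition $H^0(X,V(C_0+F))=0$ forces the numerical invariants $d$ and $r$ of $V$ (in the sense of Subsection~\ref{subsec:extinderi}) to satisfy the criterion of Proposition~\ref{prop: prioritary}, namely $d=-1$ or $r\le -1$; irreducibility then follows from Walter's theorem on the stack ${\mathrm{Prior}}_X(2,K_X,4)$. First I would recall that since $c_1(V)=K_X=\sO_X(-2C_0-(e+2)F)$, the canonical extension (\ref{prio: 1}) reads
\[
0\to \sO_X(dC_0+rF)\to V\to \sO_X(-(d+2)C_0-(e+2+r)F)\otimes\sI_\zeta\to 0,
\]
with $d\ge -1$, and that by Theorem~\ref{thm:BrS} the non-negativity of $\ell(\zeta)$ imposes an inequality relating $d$, $r$ and $c_2=4$.

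Next I would argue by contradiction: suppose $V$ is not prioritary, so by Proposition~\ref{prop: prioritary} we have $d\ge 0$ and $r\ge 0$. Twisting the canonical extension by $\sO_X(C_0+F)$ gives the exact sequence
\[
0\to H^0(X,\sO_X((d+1)C_0+(r+1)F))\to H^0(X,V(C_0+F))\to \cdots.
\]
With $d\ge 0$ and $r\ge 0$ the line bundle $\sO_X((d+1)C_0+(r+1)F)$ has $(d+1)\ge 1>0$ and $(r+1)\ge 1>0$, hence a nonzero global section (by the effectivity criterion on $X$ recalled in Subsection~\ref{subsec:conv}). This forces $H^0(X,V(C_0+F))\ne 0$, contradicting the hypothesis. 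Therefore $d=-1$ or $r\le -1$, and Proposition~\ref{prop: prioritary} gives that $V$ is prioritary.

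For the second assertion, $V$ prioritary with $c_1=K_X$, $c_2=4$ means $V$ lies in ${\mathrm{Prior}}_X(2,K_X,4)$, which is smooth and irreducible by Walter (\cite{Wa93} Proposition~2, \cite{Wa95}); the subset $\sN_X$ cut out by the open condition $H^0(X,V(C_0+F))=0$ is an open substack (semicontinuity of $h^0$), hence an open substack of an irreducible stack, so $\sN_X$ is itself irreducible (and nonempty, as $\Omega^1_X$ shows). The only delicate point is purely bookkeeping: making sure the twist used in the splitting argument produces strictly positive coefficients in \emph{both} $C_0$ and $F$ so that effectivity really applies — that is, checking that the chosen twist $\sO_X(C_0+F)$ is exactly matched to the bound $d\ge -1$ coming from the normalisation, which it is. I do not expect any serious obstacle beyond this.
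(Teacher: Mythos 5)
Your proof is correct and takes essentially the same route as the paper: both twist the canonical extension by $\sO_X(C_0+F)$, use the effectivity criterion for line bundles on $\Sigma_e$ to deduce from $H^0(X,\sO_X((d+1)C_0+(r+1)F))=0$ and $d+1\ge 0$ that $r\le -1$, and then invoke Proposition~\ref{prop: prioritary} together with Walter's irreducibility result; you merely phrase the numerical step contrapositively and add the (correct) semicontinuity remark for openness.
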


\begin{proof}
We apply the numerical criterion of Proposition \ref{prop: prioritary}.
Twisting the canonical extension (\ref{prio: 1}) by $\mathcal{O}_X(C_0+F)$, 
$H^0(X,V(C_0+F))=0$ implies $H^0(X,\mathcal{O}_X((d+1)C_0+(r+1)F))=0$. 
Since $d+1\geq 0$ it follows that $r+1<0$. 

The irreducibility of the stack follows from \cite{Wa93} Proposition 2.
\end{proof}

\begin{rmk}
\label{rmk:inclusions}
The stack of bundles with $H^0(X,V(C_0+F))=0$ sits between the all the stacks of stable bundles and the stack of prioritary bundles. 
%It is quite clear that it is not a variety. Indeed, if it were a variety, then it would be a fine moduli space, and hence it would carry an universal bundle, and the existence of the universal bundle is impossible, since $\mathrm{End}(\Omega^1_X)$ is two-dimensional. PENDING
We mention that there are prioritary bundles that do not satisfy the vanishing (\ref{eqn:vanishing}). In fact, any vector bundle with $c_1(V)=K_X$, $c_2(V)=4$ and numerical invariants $d=-1$ and $r=-1$ is prioritary, from Proposition \ref{prop: prioritary} and has $H^0(X,V(C_0+F))\ne 0$. The existence of these bundles follows from \cite{ABNMJ} Theorem 10 (II).

The Riemann-Roch theorem implies immediately that the dimension of the stack $\mathcal N_X$ equals $4$. Indeed, for any $V$ omalous and prioritary bundle, we have $\chi(V,V)=\chi(X,V^*\otimes V)$, and $c_1(V^*\otimes V)=0$ and $c_2(V^*\otimes V)=4c_2(V)-c_1^2(V)=8$ and hence $\chi(V,V)=-4$. The stack is also smooth, \cite{Wa95}.
\end{rmk}

In the next result, we obtain a one-to-one correspondence between the bundles considered here and certain monads.
%Our first result is 

\begin{thm}
\label{thm: monada}
Let $V$ be a rank-two vector bundle on $X$
with $c_1(V)=K_X$ and $c_2(V)=4$. Then  $H^0(X,V(C_0+F))=0$ if and only if $V$ is the cohomology of a monad:
%$\Omega^1_X$ is the cohomology of a monad
\begin{equation}
\label{eqn: monada}
0\rightarrow A\stackrel{a}{\rightarrow}B
\stackrel{b}{\rightarrow}C\rightarrow 0,
%\tag{\it M}
\end{equation}
where $A\cong\mathcal{O}_X(-C_0-(e+1)F)^{\oplus e}$, 
$B\cong\sO_X(-F)^{\oplus 2}\oplus\sO_X(-C_0-eF)^{\oplus(e+2)}$ and
$C\cong\mathcal{O}_X^{\oplus 2}$.
Moreover, in this description, isomorphism classes of monads map bijectively to isomorphism classes of bundles.
\end{thm}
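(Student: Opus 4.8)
The plan is to use Buchdahl's Beilinson spectral sequence (\ref{spectral}) applied to the bundle $V$, and to show that the vanishing hypothesis (\ref{eqn:vanishing}) forces the $E_1$-page to collapse to exactly three nonzero columns of the shapes claimed, yielding the monad (\ref{eqn: monada}); conversely, a Chern-class and cohomology computation on the display of such a monad will recover $c_1(V)=K_X$, $c_2(V)=4$ and $H^0(X,V(C_0+F))=0$. First I would compute, for a rank-two $V$ with $c_1(V)=K_X=\mathcal O_X(-2C_0-(e+2)F)$ and $c_2(V)=4$, all the cohomology groups $H^q(X,V)$, $H^q(X,V(-F))$, $H^q(X,V(-C_0))$ and $H^q(X,V(-C_0-F))$ that feed into (\ref{eqn: E_0})--(\ref{eqn: E_1a}). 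Riemann--Roch gives $\chi(V)=2\chi(\mathcal O_X)+\tfrac12 c_1(V)\cdot(c_1(V)-K_X)-c_2(V)=2-4=-2$; note $V(-C_0-F)$ and $V(-C_0)$ carry no sections since any section would give a subsheaf violating the numerical shape relative to the canonical extension (one invokes $d\ge -1$ together with the hypothesis, exactly as in the proof of Proposition \ref{prop:ireductibilitate}); and $H^0(X,V)=H^0(X,V(-F))=0$ follows a fortiori from $H^0(X,V(C_0+F))=0$. Dualizing via $V\cong V^*\otimes K_X$ and Serre duality, $H^2(X,V(m))\cong H^0(X,V(-m-F))^{\vee}$ up to the twist, and one checks these $H^2$'s vanish as well for the relevant twists because $V^*\cong V(-K_X)$ has large enough determinant. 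The upshot should be that only $H^1$ survives in each of the four cohomology computations, with the specific ranks $e$, $2$, $e+2$ (and one more that must cancel) dictated by Lemma \ref{lem: h1}.

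Having pinned down the $E_1$-page, the spectral sequence (\ref{spectral}) abutting to $V$ in degree $0$ forces the complex
\[
E_1^{-2,1}\to E_1^{-1,1}\to E_1^{0,1}
\]
to be a monad with cohomology $V$: the outer differentials vanish for degree reasons, $a=d_1^{-2,1}$ is injective and $b=d_1^{-1,1}$ is surjective because $E_2^{-2,\bullet}$ and $E_2^{0,\bullet}$ must die, and $E_2^{-1,1}=V$. Plugging in (\ref{eqn: E_0}), (\ref{eqn: E_2}) and the exact sequence (\ref{eqn: E_1a}) with the cohomology ranks computed above identifies $A\cong\mathcal O_X(-C_0-(e+1)F)^{\oplus e}$, $C\cong\mathcal O_X^{\oplus 2}$, and shows $B$ is an extension of $\mathcal O_X(-C_0-eF)^{\oplus(e+2)}$ by $\mathcal O_X(-F)^{\oplus 2}$; since $\Ext^1(\mathcal O_X(-C_0-eF),\mathcal O_X(-F))\cong H^1(X,\mathcal O_X(C_0+(e-1)F))=0$ by Lemma \ref{lem: h1} (here $a=1\ge 0$ and one checks $ke-(e-1)-2<0$ for $k=0,1$ when $e\ge1$; the case $e=0$ is degenerate and $A=0$), that extension splits, giving $B\cong\mathcal O_X(-F)^{\oplus2}\oplus\mathcal O_X(-C_0-eF)^{\oplus(e+2)}$. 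For the converse, given a monad of this type, one reads off from the display (Definition \ref{defn:display}) that $\mathrm{rk}\,V=2$, and the additivity of Chern classes along the two short exact sequences of the display gives $c_1(V)=c_1(B)-c_1(A)-c_1(C)=K_X$ and $c_2(V)=4$ after a short computation; the vanishing $H^0(X,V(C_0+F))=0$ follows by twisting the display rows by $\mathcal O_X(C_0+F)$ and using $H^0(X,B(C_0+F))=0$, which holds because $H^0(X,\mathcal O_X(C_0))=0$ is false — so instead one uses that the sub-line-bundle $K=\ker b$ has $H^0(K(C_0+F))=0$ from $H^0(X,\mathcal O_X((C_0+F)\otimes(-F)))\oplus\cdots$ vanishing by Lemma \ref{lem: h1}, and $H^0(A(C_0+F))=H^0(\mathcal O_X(-eF))^{\oplus e}=0$, hence $H^0(V(C_0+F))\hookrightarrow H^0(Q(C_0+F))$ while $Q=\mathrm{coker}\,a$ embeds its sections into $H^0(C(C_0+F))=H^0(\mathcal O_X(C_0+F))^{\oplus2}$ — this last group is nonzero, so the argument must instead chase $H^1(A(C_0+F))$ to kill the contribution, which vanishes by Lemma \ref{lem: h1} since $-C_0-(e+1)F+C_0+F=-eF$ has $a=0$. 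I will arrange the bookkeeping so that each needed vanishing is a direct citation of Lemma \ref{lem: h1}.

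Finally, the bijection on isomorphism classes is the formal part: Lemma \ref{lem:monads} reduces the surjectivity and injectivity of the map $h:\Hom(M,M')\to\Hom(V,V')$ to the six vanishing conditions $\Hom(B,A')=\Hom(C,B')=0$, $H^1(B^*\otimes A')=H^1(C^*\otimes B')=0$, $H^1(C^*\otimes A')=H^2(C^*\otimes A')=0$. Since all of $A,B,C$ are direct sums of the three line bundles $\mathcal O_X(-C_0-(e+1)F)$, $\mathcal O_X(-F)$, $\mathcal O_X(-C_0-eF)$ and $\mathcal O_X$, each of these groups is a finite direct sum of $H^i(X,\mathcal O_X(aC_0+bF))$ for explicit small $(a,b)$, and I would simply tabulate them: $\Hom(B,A')$ and $\Hom(C,B')$ are sums of $H^0$ of line bundles of the forms $\mathcal O_X(-eF)$, $\mathcal O_X(C_0+F)$-type twists that are all non-effective (here one checks the $C_0$-coefficient is negative in each summand), hence zero; the $H^1$ and $H^2$ groups vanish by Lemma \ref{lem: h1} and by the fact that on a surface $H^2(X,\mathcal O_X(aC_0+bF))$ is Serre-dual to $H^0(X,\mathcal O_X((-2-a)C_0+(-e-2-b)F))$, which is non-effective for the twists in question. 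Applying Lemma \ref{lem:monads} then gives that $h$ is a bijection on $\Hom$-groups for every pair, and in particular an isomorphism $V\cong V'$ of cohomology bundles is induced by an isomorphism of the corresponding monads; combined with the existence half, this establishes the claimed bijection between isomorphism classes of monads and isomorphism classes of bundles. The main obstacle, as usual with Beilinson-type arguments, is the first paragraph: verifying cleanly and without sign or index errors that the hypothesis $H^0(X,V(C_0+F))=0$ (rather than a naive stability assumption) suffices to kill every stray cohomology group on the $E_1$-page, so that the page really does collapse to the three displayed columns with exactly the stated multiplicities $e$, $e+2$, $2$; the uniform treatment of the boundary case $e=0$ (where $A=0$ and the monad degenerates to a short exact sequence) needs a remark but no separate proof.
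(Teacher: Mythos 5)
Your overall strategy coincides with the paper's: Buchdahl's spectral sequence for the ``only if'' direction, the display of the monad for the ``if'' direction, and Lemma \ref{lem:monads} for the bijection on isomorphism classes. The ``only if'' part and the bijection part are essentially correct (all the needed $H^0$'s vanish because each relevant twist of $V$ injects into $V(C_0+F)$, and the $H^2$'s follow by $V\cong V^*\otimes K_X$ and Serre duality; your displayed duality formula carries a spurious $-F$, but that is cosmetic).

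The ``if'' direction, however, has a genuine gap. You attempt to prove $H^0(X,V(C_0+F))=0$ by chasing the display twisted by $\mathcal O_X(C_0+F)$, and the chase does not close: the injection $H^0(V(C_0+F))\hookrightarrow H^0(Q(C_0+F))$ is useless because $H^0(Q(C_0+F))$ receives $H^0(B(C_0+F))\ne 0$ and need not vanish, $K=\ker b$ has rank $e+2$ (not a line bundle) and sits inside $B(C_0+F)$ which has many sections, and the vanishing you ultimately lean on is false: $H^1(X,A(C_0+F))=H^1(X,\mathcal O_X(-eF))^{\oplus e}\cong H^0(\mathbb P^1,\mathcal O_{\mathbb P^1}(e-2))^{\oplus e}$ by Lemma \ref{lem: h1} (case $a=0$, $b=-e$), which is nonzero for every $e\ge 2$. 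The paper avoids all of this by first converting the statement: since $V\cong V^*\otimes K_X$, Serre duality gives $H^0(X,V(C_0+F))\cong H^2(X,V(-C_0-F))^{\vee}$, and the display twisted by $\mathcal O_X(-C_0-F)$ then works cleanly --- $H^2(X,B(-C_0-F))=0$ forces $H^2(X,Q(-C_0-F))=0$ (there is no $H^3$ on a surface), and $H^1(X,C(-C_0-F))=H^1(X,\mathcal O_X(-C_0-F))^{\oplus 2}=0$ by the $a=-1$ case of Lemma \ref{lem: h1}, whence $H^2(X,V(-C_0-F))=0$. You should replace your direct chase with this dualized one; as written, your argument for the ``if'' implication fails for $e\ge 2$.
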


\begin{proof}
We will repeatedly use the  isomorphism
$V\cong V^*\otimes K_X$. 

\medskip

{\em The "if" part.} Suppose that $V$ is given by the cohomology of a monad as in the statement. By duality, we have to prove that $H^2(X,V(-C_0-F))=0$. From the display of the monad, Definition \ref{defn:display},
twisted by $\mathcal O_X(-C_0-F)$ we obtain two short exact sequences
\begin{equation}
\label{eqn: display 1}
0\to A(-C_0-F)\to B(-C_0-F)\to Q(-C_0-F)\to 0
\end{equation}
and 
\begin{equation}
\label{eqn: display 2}
0\to V(-C_0-F)\to Q(-C_0-F)\to C(-C_0-F)\to 0.
\end{equation}

From the sequence (\ref{eqn: display 1}) we have $H^2(X,Q(-C_0-F))=0$. Since $C=\mathcal O_X^{\oplus 2}$ we also have $H^1(X,C(-C_0-F))=0$. Hence, from the sequence (\ref{eqn: display 2}) we infer that $H^2(X,V(-C_0-F))=0$.

\medskip

{\em The "only if" part}.
The hypothesis implies that $H^0(X,V)=0$, therefore 
$$
H^2(X,V)\cong H^0(X,V^*\otimes  K_X)\cong H^0(X,V)=0,
$$ 
and from (\ref{eqn: E_0}) it follows that $E_1^{0,0}=E_1^{0,2}=0$. 

The hypothesis also implies that $H^0(X,V(-C_0-F))=0$ and hence $E_1^{-2,0}=0$ from (\ref{eqn: E_2}).
On the other hand, from duality and the isomorphism $V\cong V^*\otimes  K_X$, it follows that
$H^2(X,V(-C_0-F))= H^0(X,V(C_0+F))=0$, hence $E_1^{-2,2}=0$, too.

To compute the terms $E_1^{-1,0}$ and $E_1^{-1,2}$ we use the exact sequence (cf. (\ref{eqn: E_1a}))
$$
H^0(X,V(-F))\otimes\mathcal{O}_X(-F)
\rightarrow E_1^{-1,0} \rightarrow
H^0(X,V(-C_0))\otimes\mathcal{O}_X(-C_0-eF),
$$
and, respectively,
$$
H^2(X,V(-F))\otimes\mathcal{O}_X(-F)
\rightarrow E_1^{-1,2} \rightarrow
H^2(X,V(-C_0))\otimes\mathcal{O}_X(-C_0-eF).
$$

As in the previous cases, we obtain
$$
H^0(X,V(-F))=H^0(X,V(-C_0))=H^2(X,V(-F))=H^2(X,V(-C_0))=0,
$$ 
i.e. $E_1^{-1,0}=E_1^{-1,2}=0$. 

We have proved that
$$
E_1^{p,0}=E_1^{p,2}=0,\forall p\in \{-2,-1,0\}
$$
which means that the first page of the Beilinson spectral sequence looks like in Figure \ref{E1}.
%
%         Figura E1
%
\begin{figure}[ht]
	\centering
$$\begin{array}{|c|c|c|}
  \hline
  % after \\: \hline or \cline{col1-col2} \cline{col3-col4} ...
  0 & 0 & 0\\
  \hline
   E_1^{-2,\,1} & E_1^{-1,\,1} &  E_1^{0,\,1}\\
  \hline
  0 & 0 & 0\\
  \hline
\end{array}\, $$
	\caption{The first page of the spectral sequence}
	\label{E1}
\end{figure}

We compute next the remaining terms of $E_1$.

To compute $E_1^{0,1}\cong H^1(X,V)\otimes\sO_X$ we use the vanishing of $H^0(X,V)$ and of $H^2(X,V)$ and we apply the Riemann-Roch formula to obtain $h^1(X,V)=2$. It follows that
$E_1^{0,1}\cong \mathcal{O}_X^{\oplus 2}$.

To compute $E^{-2,1}_1\cong H^1(X,V(-C_0-F))\otimes\mathcal{O}_X(-C_0-(e+1)F)$ 
we apply again the Riemann-Roch formula: $\chi(X,V(-C_0-F))=-e$ and we use $H^0(V(-C_0-F))=H^2(V(-C_0-F))=0$. Hence
$E^{-2,1}_1\cong \mathcal{O}_X(-C_0-(e+1)F)^{\oplus e}$.

To compute $E^{-1,1}_1$ we take into account (\ref{eqn: E_1a}) 
and use the vanishing of 
$H^0(X,V(-C_0))$ and of $H^2(X,V(-F))$.
We deduce that
$E^{-1,1}_1$ lies in a short exact sequence
{\small
\begin{equation}
\label{eqn: E11}
0\to H^1(V(-F))\otimes\mathcal{O}_X(-F)
\rightarrow E_1^{-1,1} \rightarrow
H^1(V(-C_0))\otimes\mathcal{O}_X(-C_0-eF)\to 0.
\end{equation}
}
We compute $\chi(X,V(-F))=-2$ and $\chi(X,V(-C_0))=-e-2$. 
Since $H^0(X,V(-F))$, $H^2(X,V(-F))$, $H^0(X,V(-C_0))$ and $H^2(X,V(-C_0))$ are all equal to zero, 
we obtain $h^1(V(-F))=2$ and $h^1(V(-C_0))=e+2$. On the other hand,
\begin{align*}
&\Ext^1\big(H^1(X,V(-C_0))\otimes\sO_X(-C_0-eF),H^1(X,V(-F))\otimes\sO_X(-F)\big)=\\
&=H^1(X,V(-C_0))\otimes H^1(X,V(-F))\otimes \Ext^1(\sO_X(-C_0-eF),\sO_X(-F)).
\end{align*}

The isomorphism 
\[
\Ext^1(\sO_X(-C_0-eF),\sO_X(-F))\cong H^1(X,\sO_X(C_0+(e-1)F))
\] 
and Lemma \ref{lem: h1} for $a=1$ and $b=e-1$ yield to
$$
\Ext^1(\sO_X(-C_0-eF),\sO_X(-F))=0,
$$ 
which means that the sequence (\ref{eqn: E11}) splits and hence
$$
E_1^{-1,1}\cong \sO_X(-F)^{\oplus 2}\oplus\sO_X(-C_0-eF)^{\oplus(e+2)}.
$$

The differentials 
$d_1^{p,\,q}:E_1^{p,\,q}\rightarrow E_1^{p+1,\,q}$ 
define the complex
$$
E_1^{-2,\,1}\stackrel{a}{\rightarrow}E_1^{-1,\,1}
\stackrel{b}{\rightarrow}E_1^{0,\,1}.
$$
Let $K={\rm ker}\,a$, $L={\rm ker}\,b/{\rm Im}\,a$ 
and $M={\rm coker}\,b$. The second page $E_2$ of the spectral sequence is drawn in figure \ref{E2}.

%
%      figura E2
%
\begin{figure}[ht]
	\centering
$$\begin{array}{|c|c|c|}
  \hline
  % after \\: \hline or \cline{col1-col2} \cline{col3-col4} ...
  0 & 0 & 0\\
  \hline
   K & L & M\\
  \hline
  0 & 0 & 0\\
  \hline
\end{array}\, $$
	\caption{The second page of the spectral sequence}
	\label{E2}
\end{figure}

The differentials $d_2^{p,\,q}:E_2^{p,\,q}\rightarrow E_2^{p+2,\,q-1}$ all vanish
and then
$$
E_{\infty}^{-2,\,1}=K,\, E_{\infty}^{-1,\,1}=L,\, E_{\infty}^{0,\,1}=M.
$$
From (\ref{spectral}) we infer that
$$
K=M=0 \hspace{.5cm}\textrm{\c si} \hspace{.5cm} L=V,
$$
i.e.
$$
0\rightarrow E_1^{-2,\,1}\stackrel{a}{\rightarrow}
E_1^{-1,\,1}\stackrel{b}{\rightarrow}E_1^{0,\,1}\rightarrow 0
$$
is a monad, whose cohomology is $V$.

To prove that we have a bijection between isomorphism classes of monads and isomorphism classes of bundles, it suffices to show that (Lemma  \ref{lem:monads})
\begin{equation*}
\begin{array}{l}
  \Hom(B,A)=\Hom(C,B)=0,\\
  H^1(X,B^*\otimes A)=H^1(X,C^*\otimes B)=0,\\
  H^1(X,C^*\otimes A)=H^2(X,C^*\otimes A)=0.
\end{array}
\end{equation*}
These conditions are verified by direct computations and Serre duality.
\end{proof}

%\begin{rmk}
%The map that associates to a monad its cohomology clearly defines a morphism from the space of monads to the stack of bundles, since morphisms of monads are mapped to sheaf morphisms of their cohomology. However, we should point out that this is not an isomorphism, by the definition of the stack of bundles. 
%\end{rmk}
%

Monad descriptions of omalous bundles on  hypersurfaces in $\mathbb P^4$, Calabi-Yau complete intersection, blowups of the projective plane and Segre varieties have been recently obtained by A. A. Henni and M. Jardim~\cite{HJ}.

Note that the monad description does not automatically guarantee that it defines a morphism of stacks. This issue will be dealt with in the next section.

\section{The geometry of the stack of prioritary omalous bundles}
\label{sec:unirational}

In this section, we analyse the geometry of the stack of  prioritary omalous bundles, with emphasis on unirationality. An irreducible algebraic stack is called {\em unirational} if there exists a surjective morphism, representable by algebraic spaces, from a rational variety to an open substack of it.

Throughout this section, we assume that $e\ge 1$. Note that the case of $\mathbb P^1\times \mathbb P^1$ has been  completely described in \cite{Bu87} Proposition 1.

We begin with a study of the space $\mathcal Z$ parametrizing monads of type (\ref{eqn: monada}). With the notation from Theorem \ref{thm: monada}, the morphisms $a$ is given by two blocks of matrices
\[
a=\left(
\begin{array}{c}
a_1\\
a_2
\end{array}
\right)
\]
with
\[
a_1\in M_{2\times e}(H^0(X,\mathcal O_X(C_0+eF))),\ a_2\in M_{(e+2)\times e}(H^0(X,\mathcal O_X(F)))
\]
and the morphism $b$ is given by two blocks of matrices
\[
b=\left(
\begin{array}{cc}
b_1 & b_2
\end{array}
\right)
\]
\[
b_1\in M_{2\times 2}(H^0(X,\mathcal O_X(F))),\ b_2\in M_{2\times (e+2)}(H^0(X,\mathcal O_X(C_0+eF))).
\]

Denote by 
\[
M_1=M_{2\times e}(H^0(X,\mathcal O_X(C_0+eF)))\oplus M_{(e+2)\times e}(H^0(X,\mathcal O_X(F))),
\]
\[
M_2= M_{2\times 2}(H^0(X,\mathcal O_X(F)))\oplus M_{2\times (e+2)}(H^0(X,\mathcal O_X(C_0+eF))),
\]
\[
M_3= M_{2\times e} (H^0(X,\mathcal O_X(C_0+(e+1)F))).
\]

Recall that $h^0(\mathcal O_X(C_0+eF))=e+2$ and $h^0(\mathcal O_X(C_0+(e+1)F)=e+4$.  For $i\in\{1,2,3\}$ put $m_i=\mathrm{dim}(M_i)-1$ and compute $m_1=4e^2+8e-1$, $m_2=2e^2+8e+15$ and $m_3=2e^2+8e-1$.

We obtain the following description of $\mathcal Z$:
\[
\mathcal Z=\{(a,b)\in M_1\times M_2|\ a\mbox{ injective, }b\mbox{ surjective and }b_1\cdot a_1+b_2\cdot a_2=0\in M_3\},
\]
in particular, $\mathcal Z$ is a quasi-affine variety in $M_1\times M_2$ whose closure is given by the quadratic equations $b_1\cdot a_1+b_2\cdot a_2=0$. Using an idea from \cite{BBR}, we can prove even more:

\begin{prop}
\label{prop: Z smooth}
The variety $\mathcal Z$ is smooth of dimension $4(e^2+2e+4)$. 
\end{prop}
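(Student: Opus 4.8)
The plan is to realize $\mathcal Z$ as an open subvariety of the total space of a vector bundle over a smooth base, so that smoothness is automatic and the dimension count reduces to a rank computation. Concretely, I would first fix the pair $(a_1,a_2)$ (equivalently, fix the injective map $a\colon A\to B$) and study the fibre of the projection $\mathcal Z\to\{a\ \text{injective}\}$, $(a,b)\mapsto a$. The quadratic equation $b_1\cdot a_1+b_2\cdot a_2=0$ is \emph{linear} in $b=(b_1,b_2)$, so each fibre is a linear subspace of $M_2$, namely the kernel of the linear map $\Phi_a\colon M_2\to M_3$, $(b_1,b_2)\mapsto b_1a_1+b_2a_2$, intersected with the open locus where $b$ is surjective. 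The heart of the matter is therefore to show that $\Phi_a$ has constant rank as $a$ ranges over the (smooth, irreducible) locus of injective $a$ for which the surjectivity locus is nonempty — equivalently, that $\dim\ker\Phi_a$ is independent of $a$ on the relevant locus.

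To identify $\ker\Phi_a$ intrinsically, I would interpret $\Phi_a$ as composition with $a$: a pair $b$ satisfies $b\circ a=0$ precisely when $b$ factors through the cokernel of $a$. When $a$ is the fibrewise-injective bundle map coming from a point of $\mathcal Z$ (so that $Q=\mathrm{coker}(a)$ is a \emph{vector bundle}, being an extension of $V$ by $C$ as in the display), we get $\ker\Phi_a=\Hom(Q,C)$, while the full space $M_2$ is $\Hom(B,C)$ and $M_3$ is $\Hom(A,C)$. The exact sequence $0\to A\to B\to Q\to 0$ tensored with $C^\vee=\sO_X^{\oplus 2}$ gives $0\to \Hom(Q,C)\to \Hom(B,C)\to\Hom(A,C)\to H^1(X,Q^\vee\otimes C)\to\cdots$. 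Thus the rank of $\Phi_a$ jumps exactly when $H^1(X,Q^\vee\otimes C)=H^1(X,Q^\vee)^{\oplus 2}$ jumps. I would show this $H^1$ vanishes for every $Q$ arising in the display: $Q^\vee$ sits in $0\to C^\vee\to Q^\vee\to V^\vee\to 0$, and since $V^\vee\cong V\otimes K_X^{-1}$ is omalous-dual-twisted with $H^0(X,V(C_0+F))=0$, the cohomology vanishings already established in the proof of Theorem~\ref{thm: monada} (together with $H^1(X,\sO_X)=H^2(X,\sO_X)=0$ on $X$) force $H^1(X,Q^\vee)=0$. Hence $\Phi_a$ is surjective on the locus of $a$ coming from honest monads, so $\dim\ker\Phi_a=m_2+1-(m_3+1)=\dim M_2-\dim M_3$ is constant there.

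With the fibre dimension pinned down, I would finish as follows. The locus $U_1\subseteq M_1$ of $a=(a_1,a_2)$ that are fibrewise injective with $Q$ locally free is open, and it is nonempty and irreducible because it contains every $a$ occurring in a monad and, by Theorem~\ref{thm: monada}, such monads exist (e.g. $V=\Omega^1_X$). The restriction of the projection $\mathcal Z\to U_1$ is then a morphism whose fibres are open subsets of the vector spaces $\ker\Phi_a$ of constant dimension $\dim M_2-\dim M_3$; standard arguments (the kernels glue to a vector bundle over $U_1$ once the rank is constant, and the surjectivity-of-$b$ condition is open) show $\mathcal Z$ is an open subscheme of a vector bundle over the smooth variety $U_1$, hence smooth. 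Finally $\dim\mathcal Z=\dim U_1+(\dim M_2-\dim M_3)$; since $U_1$ is open in $M_1$ we have $\dim U_1=m_1+1=4e^2+8e$, and $\dim M_2-\dim M_3=(m_2+1)-(m_3+1)=(2e^2+8e+16)-(2e^2+8e)=16$, giving $\dim\mathcal Z=4e^2+8e+16=4(e^2+2e+4)$, as claimed. The main obstacle I anticipate is the rank-constancy step, i.e. proving $H^1(X,Q^\vee)=0$ uniformly; this is exactly where the vanishing hypothesis $H^0(X,V(C_0+F))=0$ (hence the prioritary condition) is used, and one must be careful that it is really the \emph{cokernel bundle} $Q$ from the display — not an arbitrary sheaf cokernel — that appears, so that $\Hom(Q,C)$ computes $\ker\Phi_a$ correctly on the open locus $U_1$.
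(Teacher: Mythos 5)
Your reduction to the single linear map $\Phi_a\colon M_2\to M_3$, $b\mapsto b\circ a$, does not work: $\Phi_a$ is in general \emph{not} surjective, even for $a$ coming from an honest monad. Applying $\Hom(-,C)$ to $0\to A\to B\to Q\to 0$ gives
$$\Hom(B,C)\stackrel{\Phi_a}{\longrightarrow}\Hom(A,C)\longrightarrow \Ext^1(Q,C)\longrightarrow \Ext^1(B,C),$$
and a direct check with Lemma \ref{lem: h1} shows $\Ext^1(B,C)=H^1(X,B^\vee)^{\oplus 2}=0$, so $\mathrm{coker}(\Phi_a)\cong\Ext^1(Q,C)=H^1(X,Q^\vee)^{\oplus 2}$. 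From $0\to C^\vee\to Q^\vee\to V^\vee\to 0$ and $H^1(X,\sO_X)=H^2(X,\sO_X)=0$ one gets $H^1(X,Q^\vee)\cong H^1(X,V^\vee)=H^1(X,V(2C_0+(e+2)F))$. This group is \emph{not} controlled by the vanishings established in the proof of Theorem \ref{thm: monada} (those concern $H^\bullet$ of $V$, $V(-F)$, $V(-C_0)$, $V(-C_0-F)$, all negative twists), and it is genuinely nonzero in examples: for $V=\Omega^1_X$ on $\Sigma_e$, which lies in $\mathcal N_X$, one has $H^1(X,V^\vee)=H^1(X,T_X)\cong\mathbb C^{e-1}$, nonzero for every $e\ge 2$. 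So your key claim ``$H^1(X,Q^\vee)=0$ uniformly'' is false, the rank of $\Phi_a$ jumps, and the fibre of $\mathcal Z\to M_1$ over such an $a$ has dimension $16+2h^1(X,V^\vee)>16$. Correspondingly, your final count also presupposes that $\mathcal Z\to U_1$ is dominant with $U_1$ open dense in $M_1$, which is never proved and is in fact false for $e\ge 2$ (the image must have positive codimension to compensate the larger fibres).

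The paper avoids this by working with the \emph{full} differential of $\mu\colon M_1\times M_2\to M_3$, $(a,b)\mapsto b_1a_1+b_2a_2$, namely $(\dot a,\dot b)\mapsto b\dot a+\dot b a$; the extra term $b\dot a$, coming from varying $a$, is exactly what kills the cokernel $\Ext^1(Q,C)$ of $\Phi_a$. As in \cite{BBR}, Lemma 4.9, the cokernel of the total differential embeds into $H^2(X,V^*\otimes V)$, which vanishes because $H^2(X,V^*\otimes V)\hookrightarrow H^2(X,V^*\otimes V(-F))^{\phantom{*}}$ -- more precisely $H^2(X,V^*\otimes V)\cong H^0(X,V\otimes V^*\otimes K_X)^*\subseteq H^0(X,V\otimes V^*\otimes K_X(F))^*\cong H^2(X,V^*\otimes V(-F))=0$ by the prioritary condition (\ref{eqn: prioritary}). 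Hence $\mu$ is a submersion along $\mu^{-1}(0)$ and $\mathcal Z$ is smooth of dimension $\dim M_1+\dim M_2-\dim M_3=4(e^2+2e+4)$. If you want to salvage a fibration argument, you must fibre over neither factor alone but use the two-term deformation complex of the monad.
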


\proof
The proof is very similar to \cite{BBR}, Lemma 4.9. We only have to verify that $H^2(X,V^*\otimes V)=0$ which is implied by the condition (\ref{eqn: prioritary}). This condition implies that the map 
\[
\mu:M_1\times M_2\to M_3,\ (a,b)\mapsto b_1\cdot a_1+b_2\cdot a_2
\]
is smooth and surjective, see \cite{BBR}. In particular, the dimension of $\mathcal Z=\mu^{-1}(0)$ equals $\mathrm{dim}(M_1)+\mathrm{dim}(M_2)-\mathrm{dim}(M_3)$.
\endproof

\begin{prop}
\label{prop: Z rational}
The variety $\mathcal Z$ is rational.
\end{prop}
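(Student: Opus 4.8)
The plan is to exhibit an explicit birational parametrization of $\mathcal Z$ by an affine space, exploiting the block structure of the monad together with the fact that the constraint $b_1\cdot a_1+b_2\cdot a_2=0$ is \emph{linear} in the $a_2$-block once the other blocks are generic. Concretely, recall that $a_2\in M_{(e+2)\times e}(H^0(X,\mathcal O_X(F)))$, $b_2\in M_{2\times(e+2)}(H^0(X,\mathcal O_X(C_0+eF)))$, and note that $H^0(X,\mathcal O_X(F))\cong H^0(\mathbb P^1,\mathcal O_{\mathbb P^1}(1))$ is $2$-dimensional while multiplication $H^0(\mathcal O_X(F))\otimes H^0(\mathcal O_X(C_0+eF))\to H^0(\mathcal O_X(C_0+(e+1)F))$ is, fibrewise over $\mathbb P^1$, the standard multiplication map, which is surjective. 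The idea is: on a dense open subset of $\mathcal Z$ the $(e+2)\times(e+2)$ matrix $b_2$ (viewed appropriately, or rather the pairing it induces) is of maximal rank, so the equation $b_2\cdot a_2=-b_1\cdot a_1$ can be solved for a distinguished $e$-column sub-block of $a_2$ in terms of $(a_1,b_1,b_2)$ and the remaining free entries of $a_2$; the solution is rational in the data because it amounts to inverting a matrix with entries that are polynomials (in the homogeneous coordinates) of bounded degree.

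First I would set up the incidence correctly: the quadratic equation lives in $M_3=M_{2\times e}(H^0(X,\mathcal O_X(C_0+(e+1)F)))$, and for fixed generic $a_1\in M_{2\times e}(H^0(\mathcal O_X(C_0+eF)))$ and generic $b_1\in M_{2\times 2}(H^0(\mathcal O_X(F)))$, $b_2\in M_{2\times(e+2)}(H^0(\mathcal O_X(C_0+eF)))$, the map $a_2\mapsto b_2\cdot a_2$ is $H^0(\mathcal O_X(F))$-linear into $M_{2\times e}(H^0(\mathcal O_X(C_0+(e+1)F)))$. Counting dimensions with $h^0(\mathcal O_X(F))=2$, $h^0(\mathcal O_X(C_0+eF))=e+2$, $h^0(\mathcal O_X(C_0+(e+1)F))=e+4$: the source $M_{(e+2)\times e}(H^0(\mathcal O_X(F)))$ has dimension $2e(e+2)$, the target $M_{2\times e}(H^0(\mathcal O_X(C_0+(e+1)F)))$ has dimension $2e(e+4)$. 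So this single linear map is not surjective on its own; one must use $b_1\cdot a_1$ as well, or — and this is the cleaner route — use the smoothness and surjectivity of $\mu$ already established in Proposition \ref{prop: Z smooth} to reduce to a \emph{linear} fibration. Indeed, since $\mu$ is smooth and surjective and $\mathcal Z=\mu^{-1}(0)$, it suffices to find \emph{one} block of variables in which $\mu$ becomes affine-linear with surjective linear part over a dense open set; then the fibre over $0$, restricted to that open set, is an affine-space bundle over a rational base (an open subset of a product of affine spaces $M_i$), hence rational.

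So the key steps, in order, are: (1) fix the decomposition $\mathcal Z\subset M_1\times M_2$ and observe that $\mu(a,b)=b_1a_1+b_2a_2$ is affine-linear in the pair $(a_1,a_2)$ for fixed $(b_1,b_2)$, and separately affine-linear in $(b_1,b_2)$ for fixed $(a_1,a_2)$; (2) show that for $(b_1,b_2)$ in a dense open subset $U$ of $M_{2\times 2}(H^0(\mathcal O_X(F)))\times M_{2\times(e+2)}(H^0(\mathcal O_X(C_0+eF)))$ the linear map $(a_1,a_2)\mapsto b_1a_1+b_2a_2$ from $M_{2\times e}(H^0(\mathcal O_X(C_0+eF)))\oplus M_{(e+2)\times e}(H^0(\mathcal O_X(F)))$ to $M_3$ is surjective — this is exactly the content that makes $\mu$ smooth, restated pointwise in $(b_1,b_2)$, so it follows from (or runs parallel to) \cite{BBR} Lemma 4.9 as invoked in Proposition \ref{prop: Z smooth}; (3) conclude that the projection $\mathcal Z\cap(M_1\times U)\to U$ is a Zariski-locally trivial affine-space bundle of relative dimension $\dim M_1-\dim M_3 = (4e^2+8e)-(2e^2+8e)=2e^2$, whose total space is therefore rational because $U$ is a dense open subset of an affine space hence rational; (4) check this open set of $\mathcal Z$ meets the locus where $a$ is injective and $b$ surjective (nonempty, by existence of the bundle $\Omega^1_X$ noted before Theorem \ref{thm: monada}, or the stable omalous bundles), so the rational open subset we produced is dense in the irreducible variety $\mathcal Z$.

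The main obstacle I expect is step (2)–(3): proving that the affine-space-bundle structure over $U$ is genuinely (Zariski-)locally trivial and not merely that the fibres are affine spaces. The honest way is to note that a smooth surjective morphism of smooth affine varieties all of whose fibres are affine spaces of constant dimension, \emph{and} which admits a section locally, is Zariski-locally an affine-space bundle; here a rational section $s:U\dashrightarrow \mathcal Z$ solving $b_2a_2=-b_1a_1$ for a chosen maximal-rank $e\times e$ minor of the $H^0(\mathcal O_X(F))$-bilinear pairing built from $b_2$ can be written down explicitly by Cramer's rule, giving entries that are rational functions of the coordinates — and then $\mathcal Z\cap(M_1\times U)$ is birational to $U\times \mathbb A^{2e^2}$ via $(a,b)\mapsto(b_1,b_2,\ a - \text{(the solved part)})$. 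I would spell out this Cramer's-rule section rather than appeal to a general bundle theorem, since it makes rationality manifest and self-contained; the only genuine check is that the relevant minor is not identically zero, which one verifies on a single convenient choice of $(b_1,b_2)$.
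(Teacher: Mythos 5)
Your proposal is built on the same mechanism as the paper's proof --- the equation $b_1\cdot a_1+b_2\cdot a_2=0$ is bilinear, so $\mathcal Z$ fibres \emph{linearly} over one of the two factors --- but you run it affinely over the $b$-factor, whereas the paper projectivizes: it passes to the image $\mathcal Y$ of $\mathcal Z$ in $\mathbb P M_1\times\mathbb P M_2$, identifies $\mathcal Y$ with the section of the Segre variety by the centre $\mathbb P K$ of the linear projection to $\mathbb P M_3$, and fibres that section over $\mathbb P M_1$; the inequality $m+m_2\ge m_1m_2+m_1+m_2$ forces $\mathbb P K$ to meet \emph{every} fibre $\{a\}\times\mathbb P M_2$, so $\mathcal Y$ is birational to a projective bundle over $\mathbb P M_1$, hence rational. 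Your affine variant buys trivial nonemptiness of the fibres (each kernel contains $0$) and a very explicit Cramer's-rule trivialization; the paper's version buys unconditional nonemptiness of all fibres from a one-line dimension count. Both are legitimate, and your relative dimension $\dim M_1-\dim M_3=2e^2$ is correct.

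Two steps need repair before this is a proof. First, generic surjectivity of $L_{(b_1,b_2)}:(a_1,a_2)\mapsto b_1a_1+b_2a_2$ is \emph{not} ``smoothness of $\mu$ restated pointwise'': the differential of $\mu$ at $(a,b)$ is $(\delta a,\delta b)\mapsto b_1\delta a_1+b_2\delta a_2+\delta b_1\, a_1+\delta b_2\, a_2$, and its surjectivity uses the $\delta b$-directions as well, so Proposition \ref{prop: Z smooth} does not literally give what you claim. The conclusion is nevertheless a consequence of it: if $\dim\ker L_{(b_1,b_2)}\ge 2e^2+1$ for all $(b_1,b_2)$, then the projection $\mu^{-1}(0)\to M_2$ (surjective, since every kernel contains $0$) would force $\dim\mu^{-1}(0)\ge \dim M_2+2e^2+1$, contradicting $\dim\mu^{-1}(0)=\dim M_1+\dim M_2-\dim M_3$; alternatively, exhibit one $(b_1,b_2)$ with $L$ surjective, which your Cramer's-rule plan requires anyway. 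Second, your step (4) does not do its job: the nonemptiness of $\mathcal Z$ (via $\Omega^1_X$ or stable bundles) does not show that $\mathcal Z$ meets the preimage of the good open set $U\subset M_2$, which is what density of your rational open piece requires. You need $\mathcal Z$ irreducible --- which can be extracted from the surjection onto the irreducible stack $\mathcal N_X$ with fibres the orbits of the connected group $G$ of Remark \ref{rmk: quot} --- together with either the irreducibility of $\mu^{-1}(0)$ or one explicit monad in $\mathcal Z$ whose $(b_1,b_2)$ lies in $U$. This is fixable, and the paper leaves a cognate point implicit, but as written your argument only proves rationality of an open subset of $\mu^{-1}(0)$ that has not been shown to meet $\mathcal Z$.
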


\proof
Consider the projection $M_1\times M_2\to \mathbb PM_1\times \mathbb PM_2$ and note that the fibres $\mathbb C^*\times \mathbb C^*$ of this projection over the image $\mathcal Y$ of $\mathcal Z$ are completely contained in $\mathcal Z$. Hence it suffices to prove that $\mathcal Y$ is rational.

Embed 
$
\mathbb PM_1\times \mathbb P M_2
$
by Segre in $\mathbb P(M_1\otimes M_2)$. The linear map 
\[
M_1\otimes M_2\to M_3,\ (a_1, a_2)\otimes (b_1, b_2)\mapsto b_1\cdot a_1+b_2\cdot a_2
\]
induces a rational linear projection $\mathbb P(M_1\otimes M_2)\dashrightarrow \mathbb PM_3$. If $K$ denotes the kernel of the linear map above, then the center of the projection is $\mathbb PK$. The intersection of $\mathbb PK$ with the Segre variety $\mathbb PM_1\times \mathbb PM_2$ is isomorphic to $\mathcal Y$.

We prove the rationality of this linear section of the Segre variety using an argument indicated by P. Ionescu \cite{I}. 
Recall that for $i\in\{1,2,3\}$ we defined $m_i=\mathrm{dim}(\mathbb PM_i)$ and computed $m_1=4e^2+8e-1$, $m_2=2e^2+8e+15$ and $m_3=2e^2+8e-1$. Put $m=\mathrm{dim}(\mathbb PK)$. Since $m+1\ge (m_1+1)(m_2+1)-(m_3+1)$ we obtain $m\ge m_1m_2+m_1+m_2-m_3-1$ and hence $m+m_2=m+m_3+ 16\ge m_1m_2+m_1+m_2$. In particular, $\mathbb PK$ intersects each fibre of the projection $\mathbb PM_1\times \mathbb PM_2\to \mathbb PM_1$ and hence $\mathbb PK\cap(\mathbb PM_1\times \mathbb PM_2)$ dominates $ \mathbb PM_1$. Since the intersection of $\mathbb PK$ with these fibres are projective subspaces, we infer that $\mathbb PK\cap(\mathbb PM_1\times \mathbb PM_2)$ is birational to a projective bundle over $\mathbb PM_1$ and hence is rational.
\endproof

\begin{prop}
\label{prop: nat morph}
\label{prop: surject}
There is a natural morphism $p:\mathcal Z\to \mathcal N_X$ which is representable by algebraic spaces and surjective. In particular, $p$ is a smooth atlas for $\mathcal N_X$.
\end{prop}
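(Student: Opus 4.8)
The plan is to construct the morphism $p$ functorially, using the fact that a family of monads over a base scheme $S$ induces, by taking cohomology of the monad complex over $X\times S$, a family of bundles on $X$ parametrized by $S$. Concretely, a point of $\mathcal Z$ is a pair $(a,b)$ of matrices of sections, and over the open subscheme where $a$ is fibrewise injective as a bundle map and $b$ is fibrewise surjective, the complex $A_S\to B_S\to C_S$ on $X\times S$ (with $A,B,C$ the fixed bundles from Theorem \ref{thm: monada}, pulled back) is a monad in families; its cohomology sheaf is $S$-flat and restricts on each fibre to a rank-two bundle $V$ with $c_1(V)=K_X$, $c_2(V)=4$. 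By the "if" part of Theorem \ref{thm: monada}, each such $V$ satisfies $H^0(X,V(C_0+F))=0$, hence defines an object of $\mathcal N_X$; this is exactly the data of a map $S\to\mathcal N_X$, and one checks compatibility with base change, so we get $p:\mathcal Z\to\mathcal N_X$.

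The next step is surjectivity: by the "only if" part of Theorem \ref{thm: monada}, every bundle $V$ in $\mathcal N_X$ arises as the cohomology of a monad of the prescribed shape, i.e.\ $V$ lies in the image of $p$ on closed points; so $p$ is surjective. For representability by algebraic spaces, the key point is that the fibre of $p$ over a bundle $V$ is controlled by the automorphisms of the monad: the second assertion of Theorem \ref{thm: monada}, namely that isomorphism classes of monads correspond bijectively to isomorphism classes of bundles, together with Lemma \ref{lem:monads} (whose hypotheses are verified in the proof of Theorem \ref{thm: monada}), shows that $\Hom(M,M')\xrightarrow{\sim}\Hom(V,V')$; in particular the automorphism group of a monad maps isomorphically onto $\mathrm{Aut}(V)$. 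Thus the groupoid of monads with cohomology $V$ is equivalent to $B\,\mathrm{Aut}(V)$, so the $2$-fibre product $\mathcal Z\times_{\mathcal N_X}T$ for any scheme $T$ is an algebraic space (it is the space of framings, i.e.\ of actual matrices $(a,b)$ inducing a given family, which is a torsor-like object over $T$ cut out inside an affine space); hence $p$ is representable by algebraic spaces. Since $\mathcal Z$ is smooth by Proposition \ref{prop: Z smooth} and the stack $\mathcal N_X$ is smooth of dimension $4$ (Remark \ref{rmk:inclusions}), and since $p$ is surjective with smooth fibres — the fibres being $\mathrm{PGL}$-type bundles: a point of $\mathcal Z$ lying over $V$ is a monad presentation, and the group $G=\mathrm{Aut}(A)\times\mathrm{Aut}(B)\times\mathrm{Aut}(C)$ acts on $\mathcal Z$ with the fibres of $p$ being single $G$-orbits of the expected dimension $\dim\mathcal Z-4$ — it follows that $p$ is smooth, hence a smooth atlas for $\mathcal N_X$.

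The main obstacle I anticipate is the bookkeeping for representability: one must check carefully that forming the cohomology bundle of a monad is compatible with arbitrary base change (which requires the relevant higher direct images to vanish and the cokernel/kernel sheaves to stay locally free, all of which follow from the cohomology vanishings already established fibrewise together with flatness and cohomology-and-base-change), and one must identify the fibre of $p$ precisely as a homogeneous space for the group $G=\bigl(\mathrm{Aut}(A)\times\mathrm{Aut}(B)\times\mathrm{Aut}(C)\bigr)$ acting by change of monad presentation. The dimension count $\dim G = \dim\mathcal Z - \dim\mathcal N_X = 4(e^2+2e+4)-4$ serves as a useful consistency check, and smoothness of $p$ then reduces, via the local criterion, to the surjectivity of the differential, which is exactly the statement that the monad presentation deforms with the bundle — again a consequence of the vanishing conditions in Lemma \ref{lem:monads}. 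Everything else is routine once these two points are in place.
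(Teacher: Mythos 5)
Your construction of $p$ and your surjectivity argument match the paper's: the paper builds the universal monad $0\to p_1^*A\to p_1^*B\to p_1^*C\to 0$ on $X\times\mathcal Z$ with the tautological fibrewise maps, takes its cohomology $\mathcal V$, defines $p$ by pullback of $\mathcal V$ along morphisms $S\to\mathcal Z$, and deduces surjectivity from surjectivity on objects (the ``only if'' part of Theorem \ref{thm: monada}), exactly as you do. Where you genuinely diverge is representability. The paper disposes of it in one line: $\mathcal N_X$ is an open substack of an Artin stack of vector bundles, so its diagonal is representable, and therefore \emph{any} morphism from a scheme to $\mathcal N_X$ is representable by algebraic spaces (citing Vistoli, Proposition 7.13) --- no analysis of the fibres of $p$ is needed at all. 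You instead identify the $2$-fibre product $\mathcal Z\times_{\mathcal N_X}T$ concretely, using the bijection $\Hom(M,M')\cong\Hom(V,V')$ from Lemma \ref{lem:monads} to see that the fibre over a bundle $V$ is a single orbit of $G=\mathrm{Aut}(A)\times\mathrm{Aut}(B)\times\mathrm{Aut}(C)$ with stabilizer $\mathrm{Aut}(V)$, i.e.\ a $G$-torsor (your phrasing ``equivalent to $B\,\mathrm{Aut}(V)$'' is slightly off --- the fibre is a homogeneous space, not a classifying stack --- but the intended torsor description is right and matches Remark \ref{rmk: quot} of the paper). Your route buys an explicit picture of the fibres and a dimension check $\dim G=\dim\mathcal Z-4$, at the cost of having to verify base-change compatibility for the cohomology of the monad in families, which the paper's abstract argument sidesteps entirely. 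On smoothness of $p$ the paper is silent (it is absorbed into the ``in particular''), whereas your deformation-theoretic sketch via surjectivity of the differential is a reasonable way to supply the missing detail.
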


\proof
%By the definition of the stack of vector bundles, 
It suffices to give a natural rank-two vector bundle $\mathcal V$ on $X\times \mathcal Z$ with  $c_1(\mathcal V_z)=K_X$, $c_2(\mathcal V_z)=4$ and  $H^0(X,\mathcal V_z(C_0+F))=0$ for all $z\in \mathcal Z$. Then, if $S$ be a scheme over $\mathbb C$, let $f:S\to \mathcal Z$ be a morphism, the morphism $p$ associates to $f$ the vector bundle $(\mathrm{id}_X\times f)^*(\mathcal V)$ on $X\times S$.

We have natural vector bundle morphisms $\alpha:p_1^*A\to p_1^*B$, and $\beta:p_1^*B\to p_1^*C$ defined fiberwise as follows. Any point $z\in \mathcal Z$ corresponds to a monad, and hence to a pair $(a_z,b_z)$ of morphisms $a_z:A\to B$ and $b_z:B\to C$. Over any pair $(x,z)\in X\times\mathcal Z$, we defined $\alpha_{(x,z)}=a_z$ and $\beta_{(x,z)}=b_z$. It is clear that $\alpha$ is injective, $\beta$ is surjective, and $\beta\circ\alpha=0$ and hence we have a monad on $X\times \mathcal Z$, $0\to p_1^*A\to p_1^*B\to p_1^*C\to 0$ whose cohomology $\mathcal V$ is the vector bundle we were looking for.

Representability follows from the fact that the stack of vector bundles is an Artin stack, and hence the diagonal of $\mathcal N_X$ is representable which implies that any morphism from a scheme to $\mathcal N_X$ is representable, \cite{Vi}, Proposition~7.13.

Since the morphism $p$ is representable by algebraic spaces, surjectivity follows from surjectivity on objects, \cite{Stack} Lemma 77.7.3, and surjectivity on objects in ensured by Theorem \ref{thm: monada}.
\endproof

\begin{rmk}
\label{rmk: quot}
Denote by $G$ the group 
\[
G:=\mathrm{Aut}(A)\times \mathrm{Aut}(B)\times \mathrm{Aut}(C).
\] 

Obviously, we have an action of $G$ on $\mathcal Z$ given by 
\[
(\alpha,\beta,\gamma)\cdot(a,b):=(\beta a\alpha^{-1},\gamma b\beta^{-1}),
\]
and hence we have an induced quotient stack $\left[\mathcal Z/G\right]$ of dimension 
\[
\mathrm{dim}(\left[\mathcal Z/G\right])=\mathrm{dim}(\mathcal Z)-\mathrm{dim}(G)=2e^2+4e+4.
\]

Unlike the situation of \cite{BBR}, the action of $G$ is not free, and hence the quotient stack cannot be a variety. 
%It is neither an algebraic space, as the stabilzers are not finite.
One can prove that $\mathcal Z\to\mathcal N_X$ factors through a natural morphism of stacks $[\mathcal Z/G]\to \mathcal N_X$, and, from Proposition \ref{prop: surject}, it is representable by algebraic spaces and surjective. However, by dimension reasons (recall that $\mathcal N_X$ is of dimension $4$), the two stacks are not isomorphic. 
%We can prove that this morphism is representable and surjective. These properties follow immediately from similar properties of the original morphism $\mathcal Z\to \mathcal N_X$.
\end{rmk}

Propositions \ref{prop: Z rational} and  \ref{prop: surject}  provide the necessary ingredients to show the main result of our paper:

\begin{thm}
\label{thm: unirational}
Suppose that $e\ge 1$. The stack $\mathcal N_X$ of rank-two prioritary bundles with Chern classes $c_1=K_X$ and $c_2=4$ is unirational.
\end{thm}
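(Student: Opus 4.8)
The plan is to assemble the pieces already proved in the preceding sections. The stack $\mathcal N_X$ of rank-two bundles with $c_1=K_X$, $c_2=4$ and $H^0(X,V(C_0+F))=0$ is, by Proposition~\ref{prop:ireductibilitate}, an open substack of the irreducible stack ${\mathrm{Prior}}_X(2,K_X,4)$; moreover, passing to duals identifies ${\mathrm{Prior}}_X(2,K_X,4)$ with the stack of prioritary omalous bundles (the dual of a prioritary rank-two bundle is prioritary, and dualizing sends $c_1=K_X$, $c_2=4$ to the omalous Chern classes). So it suffices to produce a rational variety mapping surjectively onto an open substack of ${\mathrm{Prior}}_X(2,K_X,4)$, via a morphism representable by algebraic spaces, and $\mathcal N_X$ together with its atlas $\mathcal Z$ does exactly this.

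Concretely, I would argue as follows. By Proposition~\ref{prop: nat morph}, the natural morphism $p:\mathcal Z\to\mathcal N_X$ is representable by algebraic spaces and surjective, and by Proposition~\ref{prop: Z rational} the variety $\mathcal Z$ is rational. Since $\mathcal N_X$ is an open substack of the irreducible stack ${\mathrm{Prior}}_X(2,K_X,4)$ (Proposition~\ref{prop:ireductibilitate}), the composition $\mathcal Z\xrightarrow{p}\mathcal N_X\hookrightarrow{\mathrm{Prior}}_X(2,K_X,4)$ is a morphism, representable by algebraic spaces, from a rational variety onto an open substack. By the definition of unirationality for an irreducible algebraic stack recalled at the start of this section, ${\mathrm{Prior}}_X(2,K_X,4)$ is unirational, and hence so is the stack of prioritary omalous bundles. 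The dimension is $-\chi(V,V)=4$ by Remark~\ref{rmk:inclusions}, and smoothness and irreducibility come from Walter~\cite{Wa93}, \cite{Wa95}.

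I do not expect a serious obstacle here: all the analytic content (the monad description, the vanishing $H^2(X,V^*\otimes V)=0$, the smoothness and rationality of $\mathcal Z$, the representability of $p$) has already been established in Theorem~\ref{thm: monada} and Propositions~\ref{prop: Z smooth}--\ref{prop: nat morph}. The one point that deserves a sentence of care is that the target stack of the unirationality statement is ${\mathrm{Prior}}_X(2,K_X,4)$ (equivalently, after dualizing, the stack of prioritary omalous bundles), whereas $p$ lands in the proper open substack $\mathcal N_X$; this is harmless precisely because unirationality only requires dominating \emph{some} open substack, and $\mathcal N_X$ is open in ${\mathrm{Prior}}_X(2,K_X,4)$ by Proposition~\ref{prop:ireductibilitate}. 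If one wanted to stay at the level of $\mathcal N_X$ itself, the statement is even more direct: $p:\mathcal Z\to\mathcal N_X$ is already a surjective morphism, representable by algebraic spaces, from the rational variety $\mathcal Z$, so $\mathcal N_X$ is unirational of dimension $4$.
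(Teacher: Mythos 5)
Your proposal is correct and follows exactly the paper's argument: the authors likewise deduce the theorem by combining Proposition~\ref{prop: Z rational} (rationality of $\mathcal Z$) with Proposition~\ref{prop: surject} (the representable surjection $p:\mathcal Z\to\mathcal N_X$), using that $\mathcal N_X$ is an open substack of the irreducible stack of prioritary bundles by Proposition~\ref{prop:ireductibilitate}. Your extra sentence on passing to duals and on why dominating the open substack $\mathcal N_X$ suffices only makes explicit what the paper leaves implicit.
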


%\begin{rmk}
%The Riemann-Roch theorem implies immediately that the dimension of the stack $\mathcal N_X$ equals $4$. Indeed, for any $V$ omalous and prioritary bundle, we have $\chi(V,V)=\chi(V^*\otimes V)$, and $c_1(V^*\otimes V)=0$ and $c_2(V^*\otimes V)=4c_2(V)-c_1^2(V)=8$. As mentioned
%\end{rmk}

Since stable bundles are prioritary, Theorem \ref{thm: unirational} implies the following:

\begin{cor}
\label{cor: unirational}
For any ample line bundle $H$ on a Hirzebruch surface $X$, the moduli space $\mathcal M_H(K_X,4)$ of omalous $H$-stable bundles is unirational.
\end{cor}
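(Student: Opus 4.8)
The plan is to derive the corollary from the unirationality of the stack $\mathcal N_X$ (Theorem~\ref{thm: unirational}) by restricting the rational atlas $\mathcal Z$ to the $H$-stable locus and pushing forward to the coarse moduli space. I first reduce to the bundles already studied. Since $V\mapsto V^*$ preserves $H$-stability and exchanges $(c_1,c_2)=(-K_X,4)$ with $(K_X,4)$, dualisation identifies the moduli space of $H$-stable omalous bundles with $\mathcal M_H(K_X,4)$, so it is enough to treat $H$-stable rank-two bundles with $c_1=K_X$ and $c_2=4$. By the Proposition asserting that every such $H$-stable bundle satisfies the vanishing~(\ref{eqn:vanishing}), each of them is an object of $\mathcal N_X$; in stacky terms the $H$-stable bundles form an open substack $\mathcal N_X^{s}\subset\mathcal N_X$, and $\mathcal M_H(K_X,4)$ is its coarse moduli space.

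Next I transport stability to the atlas. Let $p:\mathcal Z\to\mathcal N_X$ be the smooth surjective morphism of Proposition~\ref{prop: surject}, induced by the universal monad cohomology $\mathcal V$ on $X\times\mathcal Z$, and recall that $\mathcal Z$ is rational by Proposition~\ref{prop: Z rational}. Put $\mathcal Z^{s}:=p^{-1}(\mathcal N_X^{s})=\{\,z\in\mathcal Z \mid \mathcal V_z \text{ is } H\text{-stable}\,\}$. Because $H$-stability is an open condition in the flat family $\mathcal V$, the set $\mathcal Z^{s}$ is open in $\mathcal Z$; being a nonempty open subset of the irreducible rational variety $\mathcal Z$, it is dense and hence itself rational.

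The restriction of $\mathcal V$ to $X\times\mathcal Z^{s}$ is a flat family of $H$-stable bundles with the required Chern classes, so the universal property of the coarse moduli space produces a classifying morphism $\phi:\mathcal Z^{s}\to\mathcal M_H(K_X,4)$, $z\mapsto[\mathcal V_z]$. I would then check that $\phi$ is surjective: any $H$-stable $W$ with $c_1=K_X$, $c_2=4$ satisfies~(\ref{eqn:vanishing}), hence lies in $\mathcal N_X$, and by surjectivity of $p$ (which rests on Theorem~\ref{thm: monada}) one has $W\cong\mathcal V_z$ for some $z$, necessarily in $\mathcal Z^{s}$, with $\phi(z)=[W]$. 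A surjective morphism from the rational variety $\mathcal Z^{s}$ dominates $\mathcal M_H(K_X,4)$, and precomposing with a birational parametrisation of $\mathcal Z^{s}$ gives a dominant rational map from a projective space; thus $\mathcal M_H(K_X,4)$ is unirational. The case $e=0$ falls outside the atlas construction of Section~\ref{sec:unirational} and would instead be handled through Buchdahl's explicit description of the stack on $\mathbb P^1\times\mathbb P^1$ \cite{Bu87}.

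The main obstacle is the nonemptiness of $\mathcal Z^{s}$, equivalently the existence of at least one $H$-stable omalous bundle for the given polarisation, without which $\phi$ has empty source and the statement becomes vacuous. I would address it by combining the existence results for rank-two bundles with prescribed numerical invariants in \cite{ABNMJ} with the numerical stability criterion of \cite{ABCRAS2}, selecting invariants $d,r$ that guarantee $H$-stability; the bundle $\Omega^1_X\in\mathcal N_X$ shows at least that the ambient stack is nonempty, but it is itself unstable and so cannot be used directly. The remaining ingredients are standard and I would merely cite them: openness of stability in flat families, the existence of the quasi-projective coarse moduli space of $H$-stable rank-two sheaves, and the invariance of rationality under passage to a dense open subvariety.
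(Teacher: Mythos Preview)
Your approach is essentially the same as the paper's: the paper deduces the corollary in one line, writing only ``Since stable bundles are prioritary, Theorem~\ref{thm: unirational} implies the following,'' and you have supplied precisely the details that make this implication work (restricting the rational atlas $\mathcal Z$ to the $H$-stable locus, using openness of stability, and passing to the coarse moduli space).

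One remark on the nonemptiness issue you flag: the paper does not address it in the proof of the corollary at all, treating the statement as vacuously true when $\mathcal M_H(K_X,4)=\varnothing$; it only comments afterwards, in a separate remark citing \cite{ABCRAS2}, that the moduli space is nonempty when $H$ lies in a chamber of type $(K_X,4)$ non-adjacent to the fibre class. So your concern is legitimate but goes beyond what the paper itself undertakes to prove.
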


\begin{rmk}
From \cite{ABCRAS2}, it follows that if $H$ belongs to a chamber of type $(K_X,4)$, non-adjacent to the class of a fibre, then the moduli space $\mathcal M_H(K_X,4)$ is non-empty. 
%One can verify directly from the definition that there are precisely two chambers of type $(K_X,4)$.
\end{rmk}

\bibliographystyle{elsarticle-num}

\end{document}